\newtheorem{lemma}[equation]
{Lemma}
\newtheorem{thm}[equation]
{Theorem}
\newtheorem{prop}[equation]
{Proposition}
\newtheorem{cor}[equation]
{Corollary}
\newcommand{\moblue}[1]{{#1}}
\title{ The solution of the Loewy-Radwan conjecture }
\author[M. Omladi\v{c}]{Matja\v{z} Omladi\v{c}}
\address{Faculty of Mathematics and Physics, University of Ljubljana, Slovenia}
\email{matjaz@omladic.net}
\author{ Klemen \v Sivic}
\address{Faculty of Mathematics and Physics, University of Ljubljana, 
Slovenia}
\email{klemen.sivic@fmf.uni-lj.si}
\begin{document}

\begin{abstract} A seminal result of Gerstenhaber gives the maximal dimension of a linear space of nilpotent matrices. It also exhibits the structure of such a space when the maximal dimension is attained. Extensions of this result in the direction of linear spaces of matrices with a bounded number of eigenvalues have been studied. In this paper, we answer what is perhaps the most general problem of the kind as proposed by Loewy and Radwan, by solving their conjecture in the positive. We give \moblue{the maximal dimension of a} vector space of $n\times n$ matrices with no more than $k<n$ eigenvalues. We also exhibit the structure of the spaces for which this dimension is attained.

\end{abstract}

\thanks{MO\&K\v{S} acknowledge financial support from the Slovenian Research Agency (research core funding No. P1-0222). K\v{S} also acknowledges grants N1-0103 and J1-3004 from the Slovenian Research Agency.}
\keywords{linear space of matrices; eigenvalues; maximal number of distinct eigenvalues; dimension; structure; representation of groups
}
\subjclass[2020]{Primary: 15A18, 15A30
; Secondary: 20G05
}
\maketitle

\section{ Introduction }

This paper presents the positive solution of the Loewy-Radwan conjecture, which has been open for more than twenty years (Theorem \ref{thm:main}). It belongs to a theory that started over 60 years ago by the famous Gerstenhaber \cite{Ger} result on linear spaces of nilpotent matrices of maximal dimension. An interested reader may also wish to consider some recent results in the area, such as \cite{KBO} by Kokol Bukov\v{s}ek and Omladi\v{c}, and \cite{dSPI,dSPII,dSPIII} by de Seguins Pazzis. It seems that spaces of matrices satisfying more general conditions on eigenvalues have been studied for the first time by Omladi\v{c} and \v{S}emrl in \cite{OS}. We will first give a brief history of this theme.

Throughout the paper we fix positive integers $n$ and $k<n$ and suppose $V$ is a linear subspace of the space $M_n(\mathbb{C})$ of $n\times n$ complex matrices with the property that each member of $V$ has at most $k$ distinct eigenvalues. Here $\mathbb{C}$ can be replaced by any algebraically closed field of characteristic zero.  We are interested in how large the dimension of such a space can be. The case when all the matrices are assumed nilpotent dates back to Gerstenhaber \cite{Ger}, who proved that the dimension of such a space is at most $\displaystyle{n\choose 2}$. Actually, Gerstenhaber proved the result for all fields with at least $n$ elements, and this assumption was later removed (cf.\ Sere\v{z}kin \cite{Ser}, and Mathes, Omladi\v{c}, Radjavi \cite{MOR}).
Moreover, Gerstenhaber showed that when the maximal dimension is attained the space is simultaneously similar to the space of all strictly upper triangular matrices. Consequently, any space of matrices with only one eigenvalue has dimension at most $\displaystyle{n\choose 2}+1$, and in case of equality such a space is simultaneously similar to the space of upper triangular matrices with equal diagonal entries, see \cite{OS} and also \cite{dSP1} for \moblue{a} generalization of these results to other fields. The article \cite{OS} also contains the maximal possible dimension for a vector space of matrices with at most $k$ distinct eigenvalues when $k=2$ and $n$ is odd and when $k=n-1$ under some additional assumptions. In these two cases the spaces of maximal dimension were also classified in \cite{OS}.

Later, Loewy and Radwan \cite{LR} removed the assumptions needed in \cite{OS} and showed that the dimension of a vector space of matrices with most 2, respectively $(n-1)$, distinct eigenvalues is at most $\displaystyle{n\choose 2}+2$, respectively $\displaystyle{n\choose 2}+{n-1\choose 2}+1$. They also showed that for $k=3$ the corresponding upper bound for the dimension is $\displaystyle{n\choose 2}+4$ and conjectured that the upper bound is $\displaystyle{n\choose 2}+{k\choose 2}+1$ for every $k<n$. On the other hand, de Seguins Pazzis \cite{dSP} classified $\displaystyle\left({n\choose 2}+2\right)$-dimensional spaces of matrices having at most 2 distinct eigenvalues and extended the results to other fields. We also note that Jordan algebras of matrices with few eigenvalues were studied in \cite{GKOR} and that Gerstenhaber's theorem was generalized to semisimple Lie algebras \cite{DKK,MR}, which was further used to give another proof of \moblue{the} Erd\H{o}s-Ko-Rado theorem in combinatorics \cite{Woo}.

The aim of this paper is to prove the Loewy-Radwan conjecture and to classify spaces of matrices with at most $k$ distinct eigenvalues and which have the maximal possible dimension among such spaces.  More precisely, we are going to show the following.

\begin{thm}\label{thm:main}
Let $n$ and $k<n$ be positive integers and let $V$ be a linear subspace of $M_n(\mathbb{C})$ with the property that each member of $V$ has at most $k$ distinct eigenvalues. Then
$$\dim V\le {n\choose 2}+{k\choose 2}+1.$$
Moreover, if the equality holds and $k\ge 3$, then there exists $p\in\{0,1,\ldots ,n-k+1\}$ such that $V$ is simultaneously similar to the space of all matrices of the form
\begin{equation}\label{eq:required_form}
\begin{pmatrix}
A & B & C\\0 & D & E\\0 & 0 & F
\end{pmatrix}
\end{equation}
where $B\in M_{p\times (k-1)}(\mathbb{C})$, $D\in M_{k-1}(\mathbb{C})$ and $E\in M_{(k-1)\times (n-k-p+1)}(\mathbb{C})$ are arbitrary and
$\begin{pmatrix}
A & C\\0 & F
\end{pmatrix}$
is an arbitrary upper triangular matrix with equal diagonal entries.
\end{thm}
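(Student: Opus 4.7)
The approach I would take is induction on $n$, using the previously established cases as base: $k=1$ due to Omladi\v{c}--\v{S}emrl, and $k=2,3$ together with $k=n-1$ due to Loewy--Radwan. For the inductive step, the central objective is to locate a common invariant flag for an extremal space $V$, reducing the problem to lower-dimensional instances which then combine into the predicted block form.

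To set up, one can first arrange that $I\in V$: if $V$ is extremal and $I\notin V$, then $V+\mathbb{C} I$ has strictly larger dimension while still satisfying the spectral constraint (translation by a scalar shifts all eigenvalues uniformly, preserving the count of distinct ones), contradicting extremality. Next, pick a generic $A\in V$ and study its generalized eigenspace decomposition $\mathbb{C}^n=\bigoplus_{i} E_{\mu_i(A)}$, which has at most $k$ summands. The key technical hurdle is to show that, for a suitable choice of $A$, there is a flag $0\subset W_1\subset W_2\subset \mathbb{C}^n$ with $\dim W_1=p$ and $\dim W_2-\dim W_1=k-1$ preserved by every element of $V$.

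With such a flag in hand, decompose each $M\in V$ into nine blocks according to the sizes $(p,k-1,n-k-p+1)$. The middle $(k-1)\times(k-1)$ diagonal block can account for up to $k-1$ distinct eigenvalues by itself, forcing the top-left and bottom-right diagonal blocks to jointly contribute at most one additional eigenvalue across all of $V$. Hence the corner assembly $\begin{pmatrix}A&C\\0&F\end{pmatrix}$ is a single-eigenvalue space of $(n-k+1)\times(n-k+1)$ matrices, bounded by $\binom{n-k+1}{2}+1$ via the $k=1$ case; the middle block contributes at most $(k-1)^2$; and the off-diagonal blocks $B$ and $E$ contribute at most $(k-1)(n-k+1)$. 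A short algebra check confirms these pieces add to $\binom{n}{2}+\binom{k}{2}+1$. A subtle but crucial point is that the scalar diagonal of the corner piece must be linearly tied across $V$ rather than being a freely additional degree of freedom: naively adding the bounds for the top-left and bottom-right blocks separately would overcount by exactly one, matching the fact that the diagonal scalars of $A$ and $F$ are forced to coincide in the extremal form.

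The classification in the equality case then follows by enforcing equality at each step of the estimate: the off-diagonal blocks must be completely filled, the middle block must be unrestricted in $M_{k-1}(\mathbb{C})$, and the corner assembly must realize the Omladi\v{c}--\v{S}emrl extremal configuration. I expect the main obstacle to be the existence of the invariant flag with precisely the right step sizes $p$ and $k-1$. This will likely require a delicate perturbation analysis of pencils $A+tB$ with $A,B\in V$: ruling out the possibility that $V$ acts irreducibly, or preserves only flags of incompatible dimensions, should amount to showing that any such alternative configuration produces, via a suitable linear combination of elements of $V$, a matrix with more than $k$ distinct eigenvalues, contradicting the hypothesis.
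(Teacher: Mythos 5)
Your outline identifies the right target structure, but the heart of the argument is missing: you never give a working mechanism to produce the invariant flag, and this is precisely where the difficulty lies. The generalized eigenspaces of a single generic $A\in V$ are not invariant under the other members of $V$, so that decomposition alone does not yield a flag preserved by all of $V$. Your block-count estimate for the dimension presupposes the flag already exists, while the flag's existence is what you describe as ``the key technical hurdle'' and then leave open, suggesting only a vague ``perturbation analysis of pencils.'' That is not a proof strategy; it is a restatement of the problem.

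The paper circumvents this obstruction in two different ways at the two stages of the argument, and both are essential. For the upper bound, it does \emph{not} try to show that an extremal $V$ itself has an invariant flag. Instead, it observes that the set of $m$-dimensional spaces in $X$ (matrices with $\le k$ eigenvalues) is a projective subvariety of the Grassmannian which is invariant under conjugation by the solvable group $\mathcal{T}_n$ of invertible upper triangular matrices; the Borel Fixed Point Theorem then gives \emph{some} space $V'$ in this variety that is invariant under all upper-triangular conjugations, and the combinatorial bound is proved for $V'$, which suffices because $m$ is an invariant of the whole variety. For the classification, it replaces $V$ by the one-parameter degenerations $V_0=\lim_{t\to 0}\phi(t)V\phi(t)^{-1}$ and $V_\infty=\lim_{t\to 0}\phi(t)^{-1}V\phi(t)$, which are automatically $\mathbb{C}^*$-modules and hence decompose into weight spaces; this is what forces a block structure. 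The induction is on $k$ (applied to the $(n-1)\times(n-1)$ lower-right corner of $V_0(0)$), not on $n$, and a separate argument shows $V=V_0=V_\infty$ at the end. Without a tool such as the Borel Fixed Point Theorem or a degeneration/limit argument in the Grassmannian, there is no apparent way to manufacture the invariant flag your estimate requires, so the proposal has a genuine gap rather than being a shorter route to the same result.
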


The proof of this theorem is inspired by the proof of the generalization of Gerstenhaber's theorem to semisimple Lie algebras given in \cite{DKK}. Although our proof is technically more challenging, it is possible to adapt some of the main ideas from \cite{DKK} to our situation. Let us briefly explain these main ideas
. To show that $\displaystyle\dim V\le {n\choose 2}+{k\choose 2}+1$, we first observe that $V$ belongs to some (projective) subvariety of a Grassmannian variety, which is invariant for the action by conjugation of the (solvable) group of invertible upper triangular matrices. The Borel Fixed Point Theorem then enables us to reduce the problem to 
linear subspaces that are invariant under conjugation by invertible upper triangular matrices.
Such subspaces are spanned by diagonal matrices and matrix units, therefore their dimensions and the number of eigenvalues of their members can be estimated in a straightforward way.

To classify the subspaces of maximal dimension, we use an induction on $k$. We first show that our space must contain a nonderogatory (or cyclic) matrix with $k-1$ simple eigenvalues. With no loss of generality we assume that this matrix is in the Jordan canonical form.
The rest of the proof is based on the following idea.
We define a group homomorphism ${\phi} \colon t\mapsto
\begin{pmatrix} t & 0 \\ 0 & I_{n-1}
\end{pmatrix}$ and consider the spaces $V_0=\lim_{t\to 0}{{}\phi} (t)V{{}\phi} (t)^{-1}$ and $V_{\infty}=\lim_{t\to 0}{{}\phi} (t)^{-1}V{{}\phi}(t)$.  These spaces are invariant for the action $(t,W)\mapsto \phi(t)W\phi(t)^{-1}$ of the group $\mathbb{C}^{\ast}=(\mathbb{C}\setminus\{0\},\cdot)$, they have the same dimension as $V$ and their members have at most $k$ distinct eigenvalues. We first consider the structure of such spaces. Using the structure of $\mathbb{C}^\ast$-modules 
 and the condition on the number of eigenvalues we can compute that the lower-right $(n-1)\times (n-1)$ corner of  such $W$ 
has dimension exactly ${n-1\choose 2}+{k-1\choose 2}+1$ and its members have at most $k-1$ eigenvalues.
For $k\ge 4$ we use the inductive assumption, while for $k=3$ we use the main result of \cite{dSP} together with the existence of a nonderogatory matrix with $k-1$ simple eigenvalues in $W$ 
 to obtain the structure of  its 
 lower-right corner. 
 After that we show that each such $W$ is 
 of an appropriate form.  We apply this result to $V_0$ and $V_{\infty}$, and finally we show that $V_0=V_{\infty}$, which implies that $V$ is equal to these two spaces and concludes the proof of the theorem.
The above argument uses some results from representation theory, but we make it accessible to the reader by defining $V_0$ and $V_{\infty}$ in an equivalent way and then using only methods from linear algebra.

Section \ref{sec2} consists of the proof of the first part
 of {Theorem \ref{thm:main}} 
and the rest of the paper is devoted to the second part. 
 After showing some preliminary results in Section \ref{sec3pre}, in Section \ref{sec3} we prove the second part of Theorem 1 under an additional assumption that $V=\phi(t)V\phi(t)^{-1}$ for all $t\ne 0$. The general case is proved in Section \ref{sec:max}.

\section{ The {upper bound on the dimension}}\label{sec2}


{In this section we prove the first part of Theorem \ref{thm:main}, i.e. we show that a linear subspace of $M_n(\mathbb{C})$ whose members have at most $k$ distinct eigenvalues has dimension at most $\displaystyle{n\choose 2}+{k\choose 2}+1$. We will prove this with the help of algebraic geometry, therefore w}e first show that certain conditions on matrices are open in the Zariski topology. We call an $n\times n$ matrix \textit{regular} {or \textit{cyclic}} or {\textit{nonderogatory} whenever its centralizer is $n$-dimensional. It is \moblue{well known} (see e.g. \cite[Section 3.2.4]{HJ}) that this condition is equivalent to the condition that the characteristic and minimal polynomial of the matrix coincide or that the Jordan canonical form of the matrix has only one Jordan block for each eigenvalue.

\begin{lemma}\label{open_conditions}
Let $A$ be an $n\times n$ complex matrix and $k$ be a nonnegative integer. The following conditions are open in the Zariski topology.
\begin{enumerate}[(a)]
\item
$A$ is regular.
\item
$A$ has more than $k$ distinct eigenvalues.
\item
$A$ has more than $k$ simple eigenvalues (i.e., they have algebraic multiplicity 1).
\end{enumerate}
\end{lemma}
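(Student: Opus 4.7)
The plan for (a) is to interpret regularity as a rank condition on the adjoint: the centralizer $C(A) = \ker(\mathrm{ad}_A)$, with $\mathrm{ad}_A \colon X \mapsto AX - XA$, always has dimension at least $n$, so $\mathrm{rank}(\mathrm{ad}_A) \leq n^2 - n$, with equality exactly when $A$ is regular. Since the entries of the $n^2 \times n^2$ matrix representing $\mathrm{ad}_A$ are linear in those of $A$, regularity of $A$ becomes equivalent to the non-vanishing of at least one $(n^2 - n) \times (n^2 - n)$ minor of this matrix --- a union of principal Zariski open sets. This gives (a).

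For (b) and (c), the plan is to push the question to the space of polynomials and then to the space of unordered root tuples. Both conditions depend only on the characteristic polynomial $p_A$, and the map $A \mapsto (\text{coefficients of } p_A) \in \mathbb{C}^n$ is polynomial, so it suffices to prove openness of the corresponding loci in coefficient space. I will use the morphism $\phi \colon \mathbb{C}^n \to \mathbb{C}^n$, $(\lambda_1, \ldots, \lambda_n) \mapsto (\text{coefficients of } \prod_i(x - \lambda_i))$, which factors through the quotient $\pi \colon \mathbb{C}^n \to \mathbb{C}^n/S_n$; the latter is isomorphic to $\mathbb{C}^n$ by the fundamental theorem on symmetric polynomials. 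The key step is that $\pi$ is closed (as a finite morphism) and that, for any $S_n$-invariant subset $U \subset \mathbb{C}^n$, $\pi^{-1}(\pi(U)) = U$, so $\pi(U) = (\mathbb{C}^n/S_n) \setminus \pi(\mathbb{C}^n \setminus U)$; consequently $\pi$ (and hence $\phi$) carries $S_n$-invariant Zariski open sets to Zariski open sets.

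It then remains to exhibit the appropriate $S_n$-invariant open subsets of the root space $\mathbb{C}^n$. For (b), the tuples whose coordinates take on more than $k$ distinct values form
\[
U_b = \bigcup_{|S|=k+1} \ \bigcap_{\substack{i, j \in S \\ i \neq j}} \{\lambda_i \neq \lambda_j\},
\]
visibly an $S_n$-invariant union of open sets. For (c), the tuples with more than $k$ simple coordinates --- indices $i$ for which $\lambda_i \neq \lambda_j$ for every $j \neq i$ --- form
\[
U_c = \bigcup_{|S|=k+1} \ \bigcap_{\substack{i \in S,\, 1 \le j \le n \\ j \neq i}} \{\lambda_i \neq \lambda_j\},
\]
again $S_n$-invariant and open. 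By the previous paragraph, $\phi(U_b)$ and $\phi(U_c)$ are Zariski open in coefficient space, and pulling back along $A \mapsto p_A$ yields (b) and (c).

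The only mildly delicate point is verifying that $\pi$ carries $S_n$-invariant opens to opens. For (b) alone one can bypass the quotient machinery entirely: ``$p_A$ has at most $k$ distinct roots'' is equivalent to $\deg \gcd(p_A, p_A') \geq n - k$, a Zariski closed condition on the coefficients of $p_A$ (vanishing of suitable Sylvester subresultants), whose complement is the desired open set.
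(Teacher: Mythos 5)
Part (a) is the same as the paper's argument. For (b) and (c) you take a genuinely different and more conceptual route: instead of writing down explicit polynomial equations in the entries of $A$, you pass to the space of (unordered) roots via the finite quotient $\pi\colon\mathbb{C}^n\to\mathbb{C}^n/S_n\cong\mathbb{C}^n$ and use the correct general fact that a finite (hence closed) surjective morphism whose fibers are exactly the $S_n$-orbits carries $S_n$-invariant open sets to open sets; the target opens $U_b$, $U_c$ you exhibit are plainly invariant, and pulling back along $A\mapsto p_A$ finishes. The paper instead treats (b) and (c) separately and concretely: for (b) it uses that $\deg\gcd(p_A,p_A')<n-k$ is equivalent to $\mathrm{rank}\,S_{p_A,p_A'}>n+k-1$ for the Sylvester matrix $S_{p_A,p_A'}$, an explicit minor condition; for (c) it observes that the number of simple eigenvalues of $A$ equals the number of nonzero eigenvalues (with multiplicity) of $p_A'(A)$, which equals $\mathrm{rank}\big(p_A'(A)^n\big)$, so the complementary condition is a closed rank condition. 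Your approach is slicker and unifies (b) and (c), but trades away explicit defining equations and requires the finite-quotient machinery; the paper's is more elementary and produces concrete determinantal equations, which is in keeping with the style of later lemmas (the Sylvester matrix reappears in Lemma \ref{le:twozeros}). Both arguments are correct.
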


\begin{proof}
\begin{enumerate}[(a)]
\item
It is \moblue{well known} that $C(A)$, the centralizer of $A$, has dimension at least $n$  (see e.g. \cite[Section 3.2.4]{HJ}), so we may modify the defining condition of regularity into $\dim C(A)\le n$, meaning that $\dim \mathrm{ker}\,\mathrm{ad}_A \le n$, or equivalently $\mathrm{rank}\,\mathrm{ad}_A \ge n^2-n$, where $\mathrm{ad}_A\,:\, M_n(\mathbb{C})\rightarrow M_n(\mathbb{C})$ is defined in the usual way by letting $\mathrm{ad}_A: B\mapsto [A,B]{=AB-BA}$. This amounts to the same as requiring that at least one of the $(n^2-n)$-minors of the matrix of the transformation $\mathrm{ad}_A$ in a fixed basis of $M_n(\mathbb{C})$ is different from zero -- clearly an open condition.
\item
Let $p_A$ be the characteristic polynomial of $A$. Then it is \moblue{well known} and not hard to see that condition (b) is fulfilled if and only if the degree of gcd$(p_A,p_A')$ is {smaller} than {$n-k$}. {It is also a classical result (see e.g. \cite[Section 2.1]{KN}) that} this degree equals $2n-1-${rank}$\,S_{p_A,p_A'}$, where $S_{p_A,p_A'}$ is the Sylvester matrix of the polynomials $p_A$ and $p_A'$. Let us recall that (in block partition made of $n-1$ respectively $n$ rows)
  \[
    S_{p_A,p_A'}=
    \begin{pmatrix}
      a_n & a_{n-1} & \cdots & \cdots & a_0 &   &   &   \\
        & a_n & a_{n-1} & \cdots & \cdots & a_0 &   &   \\
        &   & \ddots &  \ddots  &   &   & \ddots &   \\
        &   &   & a_n & a_{n-1} & \cdots & \cdots & a_0 \\
      b_{n-1} & \cdots &  b_0 &   &   &   &   &   \\
        & b_{n-1} &  \cdots & b_0 &   &   &   &   \\
        &   & \ddots &   & \ddots &   &   &   \\
	&   &   & \ddots &   & \ddots &   &   \\
        &   &   &   & \ddots &   &  \ddots &   \\
        &   &   &   &   & b_{n-1}  & \cdots & b_0
    \end{pmatrix}.
  \]
  Here, the $a_k$'s denote the coefficients of $p_A$ and the $b_j$'s the coefficients of its derivative. So, the matrix $A$ has more than $k$ distinct eigenvalues if and only if the $(n+k)\times(n+k)$ minors of the matrix $S_{p_A,p_A'}$ are not all zero\moblue{,} which is clearly an open condition.
\item
We will show that the condition that $p_A$ has at most $k$ simple roots is closed. This condition is equivalent to the condition that the matrix $p_A'(A)$ has at most $k$ nonzero eigenvalues (counted with algebraic multiplicities), which is further equivalent to $\mathrm{rank}\, p_A'(A)^n\le k$, a closed condition.
\end{enumerate}
\end{proof}

}



Here we follow some concepts used in \cite[Section~3]{ORS}, so that we will give only the general ideas and omit some of the details.
Fix an integer $k$, $1\le k\le n-1$ and define
\[
    X=\{{A\in M_n(\mathbb{C}); A\ \mbox{has no more than}\ k\ \mbox{distinct eigenvalues}}\}.
\]
We will denote by $m$ the maximal possible dimension of a vector space $V$ such that $V\subseteq X$. {By Lemma \ref{open_conditions}(b) the set $X$ is closed in the Zariski topology and it is clearly homogeneous (i.e. if $A\in X$ and $\alpha \in \mathbb{C}$, then $\alpha A\in X$), so we may view it as a projective variety.} Following \cite[Example 6.19]{Har} 
 we introduce the Fano variety
\[
    \begin{split}
       F_m(X)= & \{V\subseteq X;V\ \mbox{a vector space,}\ \dim V=m\}
    \end{split}
\]
which is a closed subset of the Grassmannian variety $\mathrm{Gr}({m,n^2})$ of all $m$-dimensional subspaces in $\mathbb{C}^{n^2}$, considered as a projective variety via the Pl\"ucker embedding that sends a vector space $V$ with a basis $\{v_1,v_2,\ldots ,v_m\}$ into $[v_1\wedge v_2\wedge \cdots \wedge v_m]\in \mathbb{P}\left(\wedge^m(\mathbb{C}^{n^2})\right).$ {The variety $F_m(X)$ is non-empty by the definition of the number $m$.}
Let {$\mathcal{T}_n$} be the solvable algebraic group of all {invertible} upper triangular matrices and define an action of {$\mathcal{T}_n$} on $\mathrm{Gr}({m,n^2})$ by $${\mathcal{T}_n\times \mathrm{Gr}(m,n^2)}\longrightarrow {\mathrm{Gr}(m,n^2)}, (P,V)\mapsto PVP^{-1}{=\{PAP^{-1};A\in V\}}.$$ It is obvious that $X$ is invariant under conjugation, and consequently $F_m(X)$ is invariant under the above action, which is given by regular (rational) maps:
$$(P,[A_1\wedge A_2\wedge \cdots\moblue{\wedge} A_m])\mapsto [PA_1P^{-1}\wedge PA_2P^{-1}\wedge \cdots \wedge PA_mP^{-1}].$$
So, we can apply the following theorem to $F_m(X)$:

\bigskip

\noindent\textsc{Borel fixed point theorem \cite[Theorem 10.4]{Bor}:} \textit{Let $Z$ be a {non-empty} projective variety and $G$ be a {connected} solvable algebraic group acting on it via regular maps. Then this action has a fixed point {in $Z$}.}

\bigskip

Using this result we conclude.

\begin{lemma}\label{lm:borel}
There exists a linear subspace $V\in F_m(X)$ such that $PAP^{-1} \in V$ for all $A\in V$ and every {invertible} upper triangular matrix $P$.
\end{lemma}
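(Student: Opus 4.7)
The plan is to apply the Borel fixed point theorem stated immediately above directly to $F_m(X)$ equipped with the conjugation action of $\mathcal{T}_n$. This reduces the lemma to checking three hypotheses: that $F_m(X)$ is a non-empty projective variety, that $\mathcal{T}_n$ is a connected solvable algebraic group, and that the action is algebraic with $F_m(X)$ invariant.

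Non-emptiness of $F_m(X)$ is immediate from the definition of $m$ as the maximum dimension of a subspace contained in $X$, so a subspace realizing this maximum exists. Projectivity is inherited from the Grassmannian $\mathrm{Gr}(m,n^2)$, in which $F_m(X)$ sits as a closed subvariety by construction, since the Grassmannian is projective via the Pl\"ucker embedding. The group $\mathcal{T}_n$ of invertible upper triangular matrices is the standard Borel subgroup of $\mathrm{GL}_n(\mathbb{C})$: it is connected, as it is isomorphic as a variety to $(\mathbb{C}^*)^n\times \mathbb{C}^{n(n-1)/2}$, and solvable, since its commutator series terminates. Finally, the map $(P,V)\mapsto PVP^{-1}$ is clearly a morphism of varieties, and the invariance of $X$ under conjugation, already recorded in the text preceding the lemma, at once yields the invariance of $F_m(X)$.

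With these verifications in place, Borel's theorem produces some $V\in F_m(X)$ fixed by every $P\in\mathcal{T}_n$; that is, $PVP^{-1}=V$, or equivalently $PAP^{-1}\in V$ for every $A\in V$ and every invertible upper triangular $P$, which is exactly the conclusion. I do not anticipate any real obstacle here, since the statement is essentially a translation of Borel's theorem into the notation set up in the previous paragraphs; the only mildly nontrivial ingredient, the projectivity of the Fano variety $F_m(X)$, is handled by the general theory referenced from \cite{Har}.
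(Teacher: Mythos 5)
Your proposal is correct and matches the paper's approach exactly: the authors likewise set up $F_m(X)$ as a non-empty projective subvariety of the Grassmannian invariant under the conjugation action of $\mathcal{T}_n$, and then deduce the lemma as an immediate application of the Borel fixed point theorem. You spell out the hypotheses (connectedness and solvability of $\mathcal{T}_n$, projectivity via the Pl\"ucker embedding) a bit more explicitly, but there is no substantive difference.
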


{{We now investigate properties of a space satisfying the previous lemma.} In the following lemma we denote the set $\{1,\ldots ,n\}$ by $[n]$. The proof of this lemma may be found in \cite[Lemma~9]{ORS}.} 
We present all of it for the sake of completeness, some of it will be used in Lemma \ref{lm:triang}, the rest of it may be of independent interest.

\begin{lemma}\label{lm:Eij} Let $V\subseteq M_n(\mathbb{C})$ be a vector space such that $PAP^{-1}\in V$ for all $A\in V$ and all invertible upper triangular $P\in M_n(\mathbb{C})$. Then:
  \begin{enumerate}[(a)]
    \item If for some $A\in V$ and $i,j\in[n]$ with $i<j$ we have $a_{ji}\ne0$, then $E_{ij}\in V$.
    \item If for some $i,j\in[n]$ with $i<j$ we have $E_{ij}\in V$, then $E_{pq}\in V$ for all ${p,q}\in[n]$ with ${p}\le i$ and ${q}\ge j$.
        {If we have $E_{ij}\in V$ for some $i,j\in[n]$ with $i>j$, then  the commutator $[E_{pq},E_{ij}]$ belongs to $V$ for all $p,q\in[n]$ with $p<q$.}
    \item If $V$ contains a matrix of the form $A=\begin{pmatrix}
                                                  \alpha & b^{T} \\
                                                  c & D
                                                \end{pmatrix}$ in the block partition determined by dimensions $(1,n-1)$, then it also contains \moblue{the} matrices $\begin{pmatrix}
                                                  0 & b^{T} \\
                                                  0 & 0
                                                \end{pmatrix}$ and $\begin{pmatrix}
                                                  0 & 0 \\
                                                  c & 0
                                                \end{pmatrix}$.
    \item Claim (c) remains valid if we replace the first column and row by the $i$-th column an row for any $i\in[n]$.
    \item If for some $A\in V$ and $i,j\in[n]$ with $i\ne j$ we have $a_{ij}\ne0$, then $E_{ij}\in V$.
    \item If for some $A\in V$ and $i,j\in[n]$ with $i< j$ we have $a_{ii}\ne a_{jj}$, then $E_{ij}\in V$.
  \end{enumerate}
\end{lemma}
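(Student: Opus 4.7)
The plan is to exploit the two kinds of subgroups of $\mathcal{T}_n$ whose generators yield particularly simple conjugation formulas: the diagonal torus $T$ of invertible diagonal matrices, and the elementary unipotents $I+sE_{ij}$ with $i<j$. From the first, the conjugation action of $T$ on $M_n(\mathbb{C})$ is diagonalizable, with one-dimensional weight spaces $\mathbb{C}E_{ij}$ for $i\ne j$ and the diagonal matrices as the zero weight space. Since $V$ is $T$-invariant, it inherits the weight decomposition
\[
V \;=\; V_{\mathrm{diag}} \;\oplus\; \bigoplus_{i\ne j}\bigl(V\cap\mathbb{C}E_{ij}\bigr),
\]
where $V_{\mathrm{diag}}=V\cap\{\text{diagonal matrices}\}$, and each $V\cap\mathbb{C}E_{ij}$ is either $0$ or all of $\mathbb{C}E_{ij}$. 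This yields (e) at once: for $A\in V$ and $i\ne j$ with $a_{ij}\ne 0$, the weight-$(i,j)$ projection $a_{ij}E_{ij}$ of $A$ lies in $V$, whence $E_{ij}\in V$. The same decomposition shows that the diagonal projection $\sum_k a_{kk}E_{kk}$ of any $A\in V$ lies in $V$; we will use this for (f).

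The second tool is the observation that for any $X\in V$ and any $p<q$, the conjugate $(I+sE_{pq})X(I-sE_{pq})$ is a polynomial in $s$ of degree at most two; since $V$ is finite-dimensional and contains this polynomial for all $s\in\mathbb{C}$, each coefficient lies in $V$ (by invertibility of the relevant Vandermonde matrix). Applied to $X=E_{ji}$, which belongs to $V$ by (e), with $I+sE_{ij}$ for $i<j$, a short computation gives
\[
(I+sE_{ij})E_{ji}(I-sE_{ij}) \;=\; E_{ji} + s(E_{ii}-E_{jj}) - s^{2}E_{ij},
\]
whose $s^{2}$-coefficient is $-E_{ij}$, proving (a). Applied to $X=E_{ij}$ with $i<j$, conjugation by $I+sE_{pi}$ (for $p<i$) gives $E_{ij}+sE_{pj}$, and conjugation by $I+sE_{jq}$ (for $q>j$) gives $E_{ij}-sE_{iq}$; iterating these two slides establishes $E_{pq}\in V$ for every $p\le i$, $q\ge j$, which is the first clause of (b). For the second clause, conjugating $X=E_{ij}$ with $i>j$ by $I+sE_{pq}$ ($p<q$) has $s$-coefficient exactly $[E_{pq},E_{ij}]$, which therefore lies in $V$.

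For (c) and (d), use the diagonal one-parameter subgroup $P_t=\mathrm{diag}(1,\ldots,t,\ldots,1)$ with $t$ in position $i$. Conjugation by $P_t$ rescales the off-diagonal entries in row $i$ by $t$ and those in column $i$ by $t^{-1}$, so
\[
P_t A P_t^{-1} \;=\; A_{\mathrm{bulk}} \,+\, t\,A_{\mathrm{row}} \,+\, t^{-1} A_{\mathrm{col}},
\]
a Laurent polynomial in $t$ with coefficients in $M_n(\mathbb{C})$. Evaluating at three distinct nonzero values of $t$ and inverting the corresponding $3\times 3$ Vandermonde system in $1,t,t^{-1}$ places each of the three coefficients in $V$, which is exactly (d); part (c) is the case $i=1$. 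For (f), the diagonal projection $D=\sum_k a_{kk}E_{kk}$ of $A$ lies in $V$ by the first paragraph, and conjugating $D$ by $I+sE_{ij}$ with $i<j$ yields $D+s(a_{jj}-a_{ii})E_{ij}\in V$; extracting the $s$-coefficient shows $(a_{jj}-a_{ii})E_{ij}\in V$, and since $a_{ii}\ne a_{jj}$ by hypothesis, $E_{ij}\in V$. The only genuine obstacle throughout is bookkeeping: verifying the short commutator identities above and confirming that each one-parameter family depends polynomially (or Laurent-polynomially) on its parameter, so the coefficient-extraction step is valid; both are immediate once one identifies the right subgroup.
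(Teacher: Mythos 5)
Your proof is correct. Since the paper defers the proof to \cite[Lemma~9]{ORS} and does not reproduce it, I cannot compare line-by-line, but your argument — exploiting the $T$-weight decomposition of the Borel-fixed space $V$ for (c), (d), (e), and the diagonal projection used in (f), then coefficient-extraction from conjugations by elementary unipotents $I+sE_{pq}$ for (a), (b), (f) — is precisely the standard way to establish these facts and is almost certainly the content of the cited lemma. All the displayed commutator identities check out (in particular $(I+sE_{ij})E_{ji}(I-sE_{ij})=E_{ji}+s(E_{ii}-E_{jj})-s^2E_{ij}$ and the row/column slides for (b)), the Vandermonde/Laurent-polynomial coefficient-extraction step is valid over $\mathbb{C}$, and (e) is legitimately proved before (a) since (a) uses it. No gaps.
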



{%
Using Lemma \ref{lm:Eij} we will {now} show that {a subspace of $M_n(\mathbb{C})$ which satisfies Lemma \ref{lm:borel} }
 has a very special form.

\begin{lemma}\label{lm:triang}
Let $V$ be a subspace of $M_n(\mathbb{C})$ which satisfies the conditions of Lemma \ref{lm:borel}.
Then  there exists a sequence of 
standard subspaces $U_{i_1},U_{i_2},$ $\ldots, U_{i_s}$ 
 of $\mathbb{C}^n$  of dimension at least 2 with trivial intersections such that 
 for each $t\in\{i_1,i_2,\ldots ,i_s\}$ the span
$$\emph{Span}\,\{e_i\,;\,\mbox{there exists}~e_j\in U_t\,\ 
\mbox{for some}~j\ge i\}$$ is invariant under all members of $V$, 
 and:
  \begin{enumerate}[(a)]
    \item For any index $t\in\{i_1,i_2,\ldots,i_s\}$ and any 
 standard basis vectors $e_i,e_j\in U_t$ we have $E_{ij}\in V$.
    \item All $E_{ij}$ belong to $V$ for {$1\le i<j\le n$.}
  \end{enumerate}
\end{lemma}

\begin{proof}
Recall that $V$ is of maximal possible dimension among spaces of matrices having at most $k$ distinct eigenvalues.
Let $W_1\le W_2\le\cdots\le W_r$ be a maximal chain of 
 subspaces of $\mathbb{C}^n$ which are invariant under all members of $V$ and spanned by some of the first vectors among $e_i$. 
If $\dim W_{t-1}<\dim W_t-1$, let $U_t$ be spanned by standard basis vectors $e_i\in W_t$ such that $e_i\not\in W_{t-1}$. Note that the inequality $\dim U_t\ge 2$ follows immediately.

{ Let $i\in \{1,2,\ldots ,n\}$ be arbitrary index such that $e_i\in U_t$ and $e_{i-1}\in U_t$ and suppose that $a_{ij}=0$ for all $A\in V$ and all indices $j<i$. Lemma \ref{lm:Eij}(e) and the second part of Lemma \ref{lm:Eij}(b) then imply that $a_{pq}=0$ for all $A\in V$ and all $p>i$ and $q<i$.
(Indeed, if $a_{pq}\ne0$ for some $q<i<p$, then $E_{pq}\in V$ by Lemma \ref{lm:Eij}(e) and then $E_{iq}=[E_{ip},E_{pq}]\in V$ by  Lemma \ref{lm:Eij}(b).)
However, then the linear span of $e_1,e_2,\ldots ,e_{i-1}$ is a $V$-invariant subspace of $\mathbb{C}^n$ which is strictly contained in $W_t$ (since it does not contain $e_i$) and strictly contains $W_{t-1}$ (since $e_{i-1}\not \in W_{t-1}$), which contradicts maximality of the chain $W_1\le W_2\le \cdots \le W_r$.

 It follows that} 
 there is an index $j<i$ {(which necessarily satisfies $e_j\in U_t$)} such that $a_{ij}\ne0$ {for some $A\in V$}.
  But then, using {the second part of} Lemma \ref{lm:Eij}(b), we conclude in particular that {the} diagonal matrix 
 {$E_{jj}-E_{ii}=[E_{ji},E_{ij}]$ is a} member of $V$. {Since $i$ was arbitrary such that $e_i\in U_t$ and $e_{i-1}\in U_t$, i}t follows that
{$V$ contains all those diagonal matrices with trace zero whose only possible nonzero entries correspond to the subspace $U_t$.}
{ General members of the corresponding diagonal block have maximal number of eigenvalues equal to $n_t=\dim U_t$.}
Let $A\in V$ be arbitrary and let $A_1, A_2, \ldots, A_r$ be the diagonal blocks of $A$ corresponding to the chain $W_1\le W_2\le \cdots \le W_r$. We will show that the matrix $A_1\oplus A_2\oplus \cdots \oplus A_{t-1} \oplus A_{t+1}\oplus \cdots\oplus A_r$ has at most $k-n_t$ distinct eigenvalues
. Assume the contrary. We have shown above that there exists a diagonal matrix $D\in V$ such that its $t$-th diagonal block has trace zero and $n_t$ nonzero distinct eigenvalues, while all the other blocks are zero. General linear combination of $A$ and $D$ then has at least $k+1$ distinct eigenvalues contradicting our assumption.
So, the matrix $A_1\oplus A_2\oplus \cdots \oplus A_{t-1} \oplus A_{t+1}\oplus \cdots\oplus A_r$ has at most $k-n_t$ distinct eigenvalues. Then it is clear that any linear combination of $A$ and $E_{ij}$, where $e_i,e_j\in U_t$, has at most $k$ distinct eigenvalues. By maximality of the dimension of $V$ it follows that $E_{ij}\in V$ concluding property \emph{(a)}.


 Property \emph{(b)} now follows easily by maximality of the dimension of $V$. 
\end{proof}

    In the situation of Lemma \ref{lm:triang} let $n_t=\dim U_t$ for $t$ such that $\dim W_{t-1}<\dim W_t-1$, and let $n_t=0$ otherwise. Furthermore, 
 let $l$ be the dimension of the space $V'$ of diagonal members of $V$ that correspond to the standard basis vectors which do not belong to any of the subspaces $U_i$} Note also that $V$ is invariant under projection on the diagonal by Lemma \ref{lm:triang}.

\begin{cor}\label{cor:k}
  Then $k\ge l+\displaystyle \sum_{t=1}^{r}n_t$.
\end{cor}

\begin{proof}

The maximal number of distinct eigenvalues of a member of $V$ is clearly a sum of all $n_t$ and the maximal number of distinct eigenvalues of a matrix from $V'$. The last number cannot be smaller than $l$, as $V'$ is $l$-dimensional.
\end{proof}

The proof of the main result of this section will be based on the above corollary and the following lemma.

\begin{lemma}\label{neenakost}
Let $k,l,r$ be positive integers and $n_1,\ldots ,n_r$ nonnegative integers satisfying $k\ge l+\sum_{t=1}^rn_t$. Then
$$l+\sum_{t=1}^r{n_t+1\choose 2} \le {k\choose 2}+1.$$
The equality holds if and only if $r=1$, $k=l+n_t$ for some $t$ and either $l=1$ or $k=l=2$.
\end{lemma}

\begin{proof}
The second entry on the left hand side, multiplied by 2, can be estimated
\[
    \sum_{t=1}^{r}n_t+\sum_{t=1}^{r}n_t^2\le \sum_{t=1}^{r}n_t \left(1+\sum_{t=1}^{r}n_t\right)\le (k-l)(k-l+1),
\]
{and} it is clear that the difference between the right hand side and the left hand side of the first inequality above is equal to
\[
    \sum_{\substack{s,t=1\\s\neq t}}^{r}n_sn_t
\]
which equals zero only if no more than one number $n_t$ is nonzero. Moreover, the equality in the second inequality holds only if $k=l+\sum_{t=1}^rn_t$.
It follows that
\begin{eqnarray*}
l+\sum_{t=1}^r{n_t+1\choose 2}&\le& \frac{1}{2}\left(k^2-2kl+l^2+k+l\right)={k\choose 2}-k(l-1)+\frac{l(l+1)}{2}\\
&\le &{k\choose 2}-\frac{1}{2}l^2+\frac{3}{2}l\le {k\choose 2}+1,
\end{eqnarray*}
where we used the fact that $1\le l\le k$. We have equality in the last two inequalities only if $l=1$ or $k=l=2$.
\end{proof}

{In the following theorem we prove 
{that $m=\displaystyle{n\choose 2}+{k\choose 2}+1$} which {proves the first part of Theorem \ref{thm:main} and }solves \cite[Conjecture~1.{2}]{LR}. In the theorem we also characterize subspaces {where equality holds in Theorem \ref{thm:main} and which additionally satisfy} conditions of Lemma \ref{lm:borel} when $k\ge 3$. We will use this characterization in Lemma \ref{lem:regular} which is one of the key steps in the characterization of 
\textit{all} subspaces {where the upper bound in Theorem \ref{thm:main} is achieved.}

\begin{thm}\label{prop:structure_borel}
{Let $1\le k\le n-1$ and let $m$ be the maximal possible dimension of a subspace of $M_n(\mathbb{C})$ whose all elements have at most $k$ distinct eigenvalues. Then
$$\displaystyle m={n\choose 2}+{k\choose 2}+1.$$
Moreover, if $k\ge 3$ and $V\in F_m(X)$ is a subspace satisfying conditions of Lemma \ref{lm:borel}, then {there exists $p\in \{0,1,\ldots ,n-k+1\}$ such that $V$ consists of all matrices
\begin{equation*}
\begin{pmatrix}
A & B & C\\0 & D & E\\0 & 0 & F
\end{pmatrix}
\end{equation*}
where $B\in M_{p\times (k-1)}(\mathbb{C})$, $D\in M_{k-1}(\mathbb{C})$ and $E\in M_{(k-1)\times (n-k-p+1)}(\mathbb{C})$ are arbitrary and
$\begin{pmatrix}
A & C\\0 & F
\end{pmatrix}$
is an arbitrary upper triangular matrix with equal diagonal entries.
}

}
\end{thm}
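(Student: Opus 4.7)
The plan is to leverage Lemma \ref{lm:triang} to express $\dim V$ in terms of the invariants $n_t=\dim U_t$ and $l$, then to maximize over admissible configurations; recall from Lemma \ref{lm:triang}(e) that $\sum_t n_t + l = k$.

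The essential preliminary is to show that $V$ contains no lower-triangular entry outside a single $U_t$-block. Suppose, for contradiction, that $E_{ab} \in V$ with $a > b$ and with $b, a$ lying in different pieces of the maximal chain $W_1 \subsetneq \cdots \subsetneq W_s = \mathbb{C}^n$ of $V$-invariant initial coordinate subspaces from the proof of Lemma \ref{lm:triang}. Repeatedly commuting $E_{ab}$ against the upper-triangular generators $E_{pq} \in V$ supplied by Lemma \ref{lm:triang}(d) via the second clause of Lemma \ref{lm:Eij}(b) produces every $E_{xy}$ with distinct $x, y \in [b, a]$ together with every $E_{xx} - E_{yy}$ for $x, y \in [b, a]$; in particular, no $\mathrm{span}(e_1, \ldots, e_c)$ with $b \le c < a$ is $V$-invariant, and consequently no cutoff $d_i$ lies in $[b, a-1]$, a contradiction. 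Hence $b$ and $a$ share a common $U_t$. Combined with Lemma \ref{lm:triang}(a, d) and the $l$-dimensional gap-diagonal subspace $L$, this yields
\[
\dim V = \binom{n}{2} + \sum_{t=1}^{r} n_t + l + \sum_{t=1}^{r} \binom{n_t}{2} = \binom{n}{2} + k + \sum_{t=1}^{r} \binom{n_t}{2}.
\]

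Next, I would argue $l \ge 1$. Since $n > k$ forces at least one gap coordinate, if $l = 0$ then the gap diagonal is identically zero and a generic $A \in V$ would display $0$ together with $\sum n_t = k$ generically distinct nonzero eigenvalues from the $U_t$-blocks, totaling $k + 1$ distinct eigenvalues and contradicting $V \subseteq X$. Hence $\sum n_t \le k - 1$, and by strict convexity of $\binom{\cdot}{2}$ we get $\sum_t \binom{n_t}{2} \le \binom{k-1}{2}$, with equality only if there is a single block of size $k - 1$ (forcing $l = 1$). Therefore
\[
\dim V \le \binom{n}{2} + k + \binom{k-1}{2} = \binom{n}{2} + \binom{k}{2} + 1,
\]
and the explicit template in the statement attains this bound, so $m = \binom{n}{2} + \binom{k}{2} + 1$.

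For the structural claim with $k \ge 3$, equality in the optimization forces $r = 1$, $n_1 = k - 1$, $l = 1$: a single block $U_1$ of $k - 1$ consecutive coordinates and a $1$-dimensional gap-diagonal subspace $L \subseteq \mathbb{C}^{n-k+1}$. Because a generic $A \in V$ already carries $k - 1$ distinct eigenvalues from the $U_1$-block, $L$ can contribute at most one further distinct value, so $L$ is forced to be the scalar line and every gap coordinate carries a common value $\lambda$. Setting $p \in \{0, \ldots, n - k + 1\}$ to be the number of gap coordinates preceding $U_1$, the block upper-triangular form of $V$ matches the template in the statement exactly. The main obstacle will be the preliminary exclusion step; once stray lower-triangular positions are ruled out, both the upper bound and the structure follow from routine convex optimization.
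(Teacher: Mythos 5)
Your proof is correct and follows the same basic strategy as the paper: fix $V$ satisfying Lemma \ref{lm:borel}, apply Lemma \ref{lm:triang} to get the block structure, count dimensions, and optimize. Your formula $\dim V=\binom{n}{2}+k+\sum_t\binom{n_t}{2}$ agrees with the paper's $\dim V=l+\binom{n}{2}+\sum_t\binom{n_t+1}{2}$ after using $\sum n_t+l=k$, and your convexity bound $\sum_t\binom{n_t}{2}\le\binom{k-l}{2}\le\binom{k-1}{2}$ (with $l\ge 1$) is a slightly cleaner route to the same inequality than the paper's explicit algebraic manipulation in $l$.

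Two small remarks on your write-up. First, the preliminary ``exclusion step'' is valid but unnecessarily indirect: if $E_{ab}\in V$ with $a>b$ and $a,b$ in different pieces, there is a cutoff $d$ of the maximal chain with $b\le d<a$, and already $E_{ab}e_b=e_a\notin\mathrm{Span}(e_1,\ldots,e_d)$ contradicts the $V$-invariance of that $W$-space from Lemma \ref{lm:triang}; the commutator closure argument you run is a correct but longer way to land on the same contradiction, and the conclusion is in any case implicit in the structure that Lemma \ref{lm:triang} already provides. Second, your observation that $l\ge 1$ is an actual necessary step that the paper uses tacitly (``where we used the fact that $1\le l\le k$''), and your justification — with $l=0$ a generic element would exhibit $k$ generically nonzero block eigenvalues plus the eigenvalue $0$ from the nonempty gap, exceeding $k$ — is exactly the right argument. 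The remainder (equality forces $r=1$, $n_1=k-1$, $l=1$; the one-dimensional gap-diagonal space must be the scalar line, since two distinct gap values would again produce $k+1$ generic eigenvalues; and the resulting block form is exactly the template, with $p$ the number of gap coordinates before $U_1$) is sound and matches the paper's conclusion.
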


}

\begin{proof}

{The spaces of matrices described in the {theorem }
are clearly invariant under conjugation by invertible upper triangular matrices, they have dimension $\displaystyle{n\choose 2}+{k\choose 2}+1$ and {for each $k\ge 1$} they consist of matrices with at most $k$ distinct eigenvalues, so $\displaystyle m\ge {n\choose 2}+{k\choose 2}+1$. To prove the converse, {by the Borel fixed point theorem} it suffices to show that spaces satisfying conditions of Lemma \ref{lm:borel} have dimension at most $\displaystyle{n\choose 2}+{k\choose 2}+1$. Let $V$ be such a space} and let $l$ and $n_t$ be defined as before Corollary \ref{cor:k}. Note that $l$ is positive, as scalar matrices are in $V$ by the maximality of the dimension of $V$. 
 We compute 
\[
    \dim V = {l}+{n\choose 2}+\sum_{t=1}^{r}{n_t+1\choose2},
\]
where the first entry on the right hand side counts {the diagonal elements of $V$ corresponding to standard basis vectors that do not belong to any of the subspaces $U_t$,} 
 the second one counts the entries strictly above the diagonal, and the terms of the third one count the entries below the diagonal and on it corresponding to each of the subspaces $U_t$. 
The first part of the theorem now immeadiately follows from Lemma \ref{neenakost}. Moreover, the equality in the inequality of the lemma holds only if $r=1$, $k=l+n_t$ for some $t$ 
 and either $l=1$ or $k=l=2$. In particular, if $k\ge 3$, then $l=1$, $r=1$ and $n_t=k-1\ge 2$, which gives us the possibilities for $V$ described in the theorem.
\end{proof}

\section{{Preliminaries for the structure result }}
\label{sec3pre}

In our considerations we will often refer to properties that hold \textit{generically}. As usual in the algebraic geometry, this will mean that the property holds on an open dense subset (in the Zariski topology). Most often generic conditions will be considered on some line. In this case a property will hold generically if it will hold for all but finite number of points of the line.

In the proof of the second part of Theorem \ref{thm:main} we will need the following condition more than once.

\noindent \textbf{Two zeros condition:} Let $n$ be a positive integer, let $s$ be a polynomial of degree no more than $n$, and let $\lambda\in \mathbb{C}\setminus\{0\}$ be fixed. For 
generic $\mu\in\mathbb{C}$ polynomial
\[
    r_\mu(t)=t^{n+2}-\lambda t^{n+1}{-}\mu s(t) 
\]
has no more than two distinct zeros.

\begin{lemma}\label{le:twozeros}
  The two zeros condition implies that $s=0$. 
\end{lemma}

\begin{proof}
Let ${s}(t)= a_n t^n +a_{n-1}t^{n-1}+\cdots + a_0$.
 By the condition under consideration, ${r}_\mu$ has at most two distinct zeros for all but a finite number of scalars $\mu$. 
 \moblue{Besides}, this polynomial has a simple zero at $\mu=0$. According to Lemma \ref{open_conditions}(c) {(applied e.g.\ to the companion matrix of a polynomial)} the condition that \moblue{a monic polynomial of given degree}
 has a simple zero is open, so that ${r}_\mu$ has a simple zero 
generically, i.e., for all but a finite number of $\mu\in\mathbb{C}$. In the rest of the proof we consider such $\mu$ that $r_{\mu}$ has a simple zero and at most two distinct zeros. For all these values of $\mu$ we can write ${r}_\mu(t) =(t-\alpha)(t-\beta)^{n+1}$, where $\alpha$ and $\beta$ {may} depend on $\mu$. The first Vieta formula determines $\alpha$ as a linear function of $\beta$, i.e.,
  \[
    \alpha =  \lambda-(n+1)\beta.
  \]
  Insert this expression into the second and the third Vieta formula to get two polynomial conditions
\begin{equation}\label{Viete2}
{n+2\choose 2}\beta ^2-(n+1)\lambda\beta -\mu a_n
=0\quad \mathrm{and}
\end{equation}
\begin{equation}\label{Viete3}
2{n+2\choose 3}\beta ^3-{n+1\choose 2}\lambda\beta ^2+\mu a_{n-1} 
=0
\end{equation}
in $\beta$ and $\mu$. 
For a fixed $\mu\in \mathbb{C}$ \moblue{as chosen above} the polynomial equations \eqref{Viete2} and \eqref{Viete3} have a common solution $\beta$, so the resultant (i.e. the determinant of the Sylvester matrix)
$${\left|\begin{array}{ccccc}
{n+2\choose 2} & -(n+1)\lambda & -\mu a_n & 0 & 0\\
0 & {n+2\choose 2} & -(n+1)\lambda & -\mu a_n & 0\\
0 & 0 & {n+2\choose 2} & -(n+1)\lambda & -\mu a_n\\
2{n+2\choose 3} & -{n+1\choose 2}\lambda & 0 & \mu a_{n-1} & 0\\
0 & 2{n+2\choose 3} & -{n+1\choose 2}\lambda & 0 & \mu a_{n-1}
\end{array}
\right|}$$
of these two polynomials has to be zero. (Note that this is a special case of the result from \cite{KN} used in Lemma \ref{open_conditions}, or see some standard textbook on algebraic geometry such as \cite[Section 3.5]{CLOS}.)
Since this condition is satisfied for general $\mu$ \moblue{as considered above}, all the coefficients at powers of $\mu$ in the obtained {resultant} 
 must be zero. 
 {{Since $\mu$ appears only in the last three columns of the above determinant, the degree of the resultant is at most 3, and it has zero constant term, since all entries of the last column of the determinant are multiples of $\mu$.}}
 Now it is clear that the coefficient at $\mu$ equals
{\begin{align*}&\left|\begin{array}{ccccc}
{n+2\choose 2} & -(n+1)\lambda& 0 & 0 & 0\\
0 & {n+2\choose 2} & -(n+1)\lambda & 0 & 0\\
0 & 0 & {n+2\choose 2} & -(n+1)\lambda & -a_n\\
2{n+2\choose 3} & -{n+1\choose 2}\lambda & 0 & 0 & 0\\
0 & 2{n+2\choose 3} & -{n+1\choose 2}\lambda & 0 & a_{n-1}
\end{array}
\right|
=\frac{1}{2}{n+2\choose 3}(n+1)^3\lambda^3a_{n-1}
\end{align*}}
and the coefficient at $\mu^3$ equals
\begin{align*} &\left|\begin{array}{ccccc}
{n+2\choose 2} & -(n+1)\lambda & -a_n&0&0\\
0 & {n+2\choose 2} & 0 & -a_n&0\\
0 & 0 & 0 & 0 & -a_n\\
2{n+2\choose 3} & -{n+1\choose 2}\lambda & 0 & a_{n-1} & 0\\
0 & 2{n+2\choose 3} & 0 & 0 & a_{n-1}
\end{array}
\right|=-4{n+2\choose 3}^2a_n^3.
\end{align*}
It follows that $a_n=0$ and $a_{n-1}=0$.
  Using 
{\eqref{Viete2} and \eqref{Viete3}} one then concludes that $\beta=0$ and $\alpha= \lambda$ independently of the chosen $\mu$ and hence $s=0$.
\end{proof}

In the proof of Theorem \ref{thm:main} we will also need the fact that for $k\ge3$ a space $V$ of maximal dimension contains a \moblue{regular matrix that has exactly $k-1$ simple eigenvalues, i.e., it is} similar to
\begin{equation}\label{eq:regular_example}
\begin{pmatrix}
                                                 \lambda_1 &  &  &  &  &  &  \\
                                                 & \ddots &  &  &  &  &  \\
                                                 &  & \lambda_{k-1} &  &  &  & \\
                                                 &  &  & \lambda_k & 1 &  &  \\
                                                 &  &  &  & \ddots & \ddots & \\
                                                 &  &  &  &  & \ddots & 1 \\
                                                 &  &  &  &  &  & \lambda_k
                                               \end{pmatrix},
\end{equation}
where all the empty entries of this matrix are zeros and $\lambda_1,\lambda_2,\ldots ,\lambda_k$ are pairwise distinct.

\begin{lemma}\label{lem:regular}
 Let $3\le k< n$ and let $V$ be a subspace of $M_n(\mathbb{C})$ of dimension $m=\displaystyle {n\choose 2}+ {k\choose2}+1$ whose members have at most $k$ distinct eigenvalues. Then $V$ contains a regular element that has $k-1$ simple eigenvalues.
\end{lemma}

\begin{proof}
 Assume the contrary. Let $Y$ be the set of all matrices that are not regular, and let $Z$ be the set of all matrices with at most $k-2$ simple eigenvalues. By Lemma \ref{open_conditions} the sets $Y$ and $Z$ are both closed in the Zariski topology, so is their union. Moreover, the union $Y\cup Z$ is clearly invariant under the action of the group $\mathcal{T}_n$ of all invertible upper triangular matrices by conjugation. {Furthermore, the Zariski closed set $Y\cup Z$ is clearly homogeneous, so we may view it as a projective variety.} Hence, we can introduce the Fano variety
$$F_m(Y\cup Z)=\{W\in\mathrm{Gr}(m,n^2);W\subseteq Y\cup Z\}$$
of the union $Y\cup Z$. The assumption that $V$ does not contain a regular element that has $k-1$ simple eigenvalues implies that the intersection $F_m(X)\cap F_m(Y\cup Z)$ is not empty. This intersection is invariant under $\mathcal{T}_n$, so by the Borel fixed point theorem it has a fixed point $V'$. However, since $k\ge 3$, $V'$ is then one of the 
 spaces described in {Theorem} 
 \ref{prop:structure_borel} and it contains a regular element with $k-1$ simple eigenvalues, which is similar (with a similarity that swaps the first two block rows and columns of \eqref{eq:required_form}) to a matrix of the form \eqref{eq:regular_example} with $\lambda_i$ pairwise distinct, contradicting the starting assumption {on $V'$}.
\end{proof}

\section{{Structure of some special spaces}}
\label{sec3}

Throughout the rest of the paper let $k\ge 3$. 
We will prove the main result by induction on $k$. Let $V$ be a space of maximal dimension $m=\displaystyle {n\choose2}+{k\choose2}+1$ satisfying the conditions of Theorem \ref{thm:main}. Note that by maximality we may assume that $V$ contains all scalar matrices. First we make a reduction that is based on Lemma \ref{lem:regular}.
{Each regular $n\times n$ matrix with $k-1$ simple eigenvalues which has at most $k$ distinct eigenvalues is similar to a matrix of the form \eqref{eq:regular_example}, so we now conjugate the space $V$ by an appropriate invertible matrix to assume that $V$ contains a matrix of the form \eqref{eq:regular_example} for some distinct $\lambda _1,\ldots ,\lambda _k\in \mathbb{C}$.}

{
In this section we describe the structure of the space considered under the following additional assumption.

\textbf{AA:} \emph{If a matrix $\begin{pmatrix}
                           \alpha & b^T \\
                           c & D
                         \end{pmatrix}$ with blocks of respective sizes 1 and $n-1$ belongs to $V$, then \moblue{the} matrices $\begin{pmatrix}
                           0 & b^T \\
                           0 & 0
                         \end{pmatrix}$, $\begin{pmatrix}
                           0 & 0 \\
                           c & 0
                         \end{pmatrix}$, and $\begin{pmatrix}
                           \alpha & 0 \\
                           0 & D
                         \end{pmatrix}$ belong to $V$.
}

This additional assumption is motivated by Representation Theory. Denote by $\mathbb{C}^*$ the multiplicative group $(\mathbb{C}\setminus \{0\},\cdot)$.
Let ${\phi}:{\mathbb{C}^* \rightarrow\mathrm{GL}_n}$ be {a group homomorphism} defined by
$t\mapsto\begin{pmatrix}
           t &  0  \\
           0  &  I_{n-1}
         \end{pmatrix}$. {For each $t\in \mathbb{C}^*$ the space ${\phi} (t)V{\phi} (t)^{-1}$ is $m$-dimensional and with elements having at most $k$ distinct eigenvalues, so it belongs to $F_m(X)$.

         \begin{lemma}
           Condition \textbf{AA} is equivalent to ${\phi} (t)V{\phi} (t)^{-1}=V$ for all $t\in\mathbb{C}^*$.
         \end{lemma}

\begin{proof}
   Condition \textbf{AA} clearly implies  ${\phi} (t)V{\phi} (t)^{-1}=V$. Conversely, a space satisfying this equality is a ${\mathbb{C}^*}$-module for the action {$(t,A)\mapsto \phi(t)A\phi(t)^{-1}$.} {Now we use the fact that every $\mathbb{C}^*$-module is a direct sum of weight spaces (see e.g. \cite[Proposition 22.5.2(iii)]{TY}) to see that $V$} 
can be written as
\[
    V=\bigoplus_{j\in\mathbb{Z}}V(j),\quad\mbox{where}\quad V(j)=\{A\in V;{{}\phi}(t)A{{}\phi}(t)^{-1}=t^j A,\ \mbox{for all}\ t\ne0 \}.
\]
  Write a matrix $A\in V(j)$ as $A=\begin{pmatrix}
                                       \alpha & b^\mathrm{T} \\
                                       c & D
                                     \end{pmatrix}$ {where $\alpha \in \mathbb{C}$, $b,c\in \mathbb{C}^{n-1}$ and $D\in M_{n-1}(\mathbb{C})$}.
It follows easily that members of $V(j)$ are nonzero only in the case that $j=0$, $j=1$, or $j=-1$. Elements of $V(0)$ are of the form $A=\begin{pmatrix}
                                       \alpha & 0 \\
                                       0 & D
                                     \end{pmatrix}$, elements of $V(1)$ are of the form $A=\begin{pmatrix}
                                       0 & b^\mathrm{T} \\
                                       0 & 0
                                     \end{pmatrix}$, and elements of $V(-1)$ are of the form $A=\begin{pmatrix}
                                       0 & 0 \\
                                       c & 0
                                     \end{pmatrix}$, after a straightforward computation.
\end{proof}

{ Our next step will be to estimate the dimensions of $V(j)$ when $j=\pm 1$. Here is an additional notation we need to introduce.
Let $V'(1)$ be the set of all matrices of the form $\begin{pmatrix}
                                                        0 & b^T \\
                                                        0 & 0
                                                      \end{pmatrix}{\in V(1)}$ such that $b$ has first $k-2$ entries equal to zero.
We define similarly $V'(-1)$.


\begin{lemma}\label{le:dimen}
  Let $l\ge k$ be the smallest index with the property that some 
{{} row $b^T=\begin{pmatrix}
     0 & \cdots & 0 & b_k & \cdots &   b_n
   \end{pmatrix}$} {with $b_l\ne 0$} equals the upper-right corner of a member of {$V'(1)$ {(with the convention $l=n+1$ if $b$ is always zero)}.} 
 Furthermore, let $c=\begin{pmatrix} c_2\\\vdots\\c_n\end{pmatrix}$ be an arbitrary lower-left corner of a member of $V(-1)$. Then $c_q=0$ for all $q\ge l$.
\end{lemma}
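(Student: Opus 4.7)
The plan is to argue by contradiction. Suppose some $Y\in V_0(-n)$ has SW corner $c$ with $c_q\neq 0$ for some $q\ge l$; take $q$ to be the \emph{largest} such index. The regular element from Lemma~\ref{lem:regular}, brought into the form \eqref{eq:regular_example} by the conjugation already in place, will be denoted by $A$. Every nonzero entry of $A$ (the diagonal entries $\lambda_i$ and the superdiagonal $1$'s in the lower-right Jordan block) sits at a position of $\phi$-weight $0$, so $A$ is $\phi$-fixed; hence $A$ persists in the limit, and $A\in V_0(0)$. By the definition of $l$, one can pick $X\in V_0'(n)$ whose NE corner $b^T$ satisfies $b_l\neq 0$ and $b_j=0$ for all $j<l$ (the vanishing for $j<k$ is built into $V_0'(n)$, while for $k\le j<l$ it follows because, by minimality of $l$, no member of $V_0'(n)$ contributes in those columns). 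For scalar parameters $s,t$ the matrix $M=A+sX+tY$ lies in $V_0$, and so has at most $k$ distinct eigenvalues.

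For each $i\in\{2,\ldots,k-1\}$, row $i$ of $M$ has nonzero entries only at $(i,1)$ and $(i,i)$ (namely $tc_i$ and $\lambda_i$), while column $i$ has the single nonzero entry $\lambda_i$ at $(i,i)$ (the potential entry $sb_i$ at $(1,i)$ vanishes since $i<l$). Expanding $\det(xI-M)$ successively along columns $2,3,\ldots,k-1$ factors out $\prod_{i=2}^{k-1}(x-\lambda_i)$ and reduces the computation to the principal submatrix indexed by $\{1\}\cup\{k,k+1,\ldots,n\}$,
\[
  M' \;=\; \begin{pmatrix}\lambda_1 & s\hat b^T \\ t\hat c & J_{n-k+1}(\lambda_k)\end{pmatrix},\qquad \hat b=(b_k,\ldots,b_n)^T,\ \hat c=(c_k,\ldots,c_n)^T.
\]
Setting $m=n-k+2$ and writing $J=J_{m-1}(\lambda_k)=\lambda_k I+N$ with $N$ the nilpotent shift, the Schur complement formula combined with $\mathrm{adj}(xI-J)=\sum_{d=0}^{m-2}N^d(x-\lambda_k)^{m-2-d}$ yields
\[
  \det(xI-M') \;=\; (x-\lambda_1)(x-\lambda_k)^{m-1} \;-\; st\sum_{d\ge 0}\sigma_d(x-\lambda_k)^{m-2-d},\qquad \sigma_d=\sum_i \hat b_i\hat c_{i+d}.
\]
The support conditions $\hat b_i=0$ for $i<l-k+1$ and $\hat c_j=0$ for $j>q-k+1$ force $\sigma_d=0$ for $d>q-l$ and collapse the sum for $d=q-l$ to the single term $\sigma_{q-l}=b_lc_q\neq 0$. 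Write $d_{\max}=q-l\ge 0$.

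Factoring out the common power of $(x-\lambda_k)$ gives $\det(xI-M')=(x-\lambda_k)^{m-2-d_{\max}}G(x,u)$, with $u=st$, $\deg_xG=d_{\max}+2$, and $G(\lambda_k,u)=-u\sigma_{d_{\max}}\neq 0$ for $u\neq 0$. Since $G(x,0)=(x-\lambda_1)(x-\lambda_k)^{d_{\max}+1}$, the leading-order balance near $x=\lambda_k$ splits the multiple root $\lambda_k$ into the $(d_{\max}+1)$-th roots of $u\sigma_{d_{\max}}/(\lambda_k-\lambda_1)$, while the simple root near $\lambda_1$ persists; hence for all $u$ outside a finite exceptional set $G(\cdot,u)$ has $d_{\max}+2$ pairwise distinct roots, none of them equal to $\lambda_2,\ldots,\lambda_{k-1},\lambda_k$. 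Counting distinct eigenvalues of $M$: the $k-2$ values $\lambda_2,\ldots,\lambda_{k-1}$, possibly $\lambda_k$ (whenever $m-2-d_{\max}>0$), plus the $d_{\max}+2$ new roots of $G$ produce at least $k+1+d_{\max}$ distinct eigenvalues when $d_{\max}<m-2$, and at least $(k-2)+(d_{\max}+2)=k+d_{\max}\ge k+1$ when $d_{\max}=m-2=n-k\ge 1$. In either case $M$ has more than $k$ distinct eigenvalues, contradicting $M\in V_0\subseteq X$; therefore $c_q=0$ for all $q\ge l$.

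The main technical obstacle is the combinatorial bookkeeping required to propagate the support assumptions ``$b_j=0$ for $j<l$'' and the maximality of $q$ through the adjugate expansion, so as to isolate the leading coefficient $\sigma_{d_{\max}}=b_lc_q$. Once $G(x,u)$ is in hand, the perturbation step is routine, modulo noting by continuity in $u$ that the roots of $G$ do not accidentally coincide with any of $\lambda_2,\ldots,\lambda_{k-1}$ for generic $u$.
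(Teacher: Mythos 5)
Your argument is correct, and while the setup coincides with the paper's (you pass the regular element \eqref{eq:regular_example} into $V_0$, add $sX+tY$ with $X\in V_0'(n)$, $Y\in V_0(-n)$, and your Schur-complement formula for $\det(xI-M')$ is exactly the content of Lemma \ref{le:char}), the way you extract the conclusion is genuinely different. The paper turns the eigenvalue bound into ``Condition two zeros'' and then, in Lemma \ref{le:twozeros}, uses Vieta's formulas and a resultant computation to force \emph{all} the convolution sums $\sum_{p}b_pc_{p+d}$ to vanish, after which $c_q=0$ for $q\ge l$ follows by descending induction on $q$. You instead assume a largest bad index $q$, observe that the support conditions isolate the single nonzero leading sum $\sigma_{q-l}=b_lc_q$, and then argue by root counting: the multiple root at $\lambda_k$ of $G(x,0)$ splits into $q-l+1$ distinct roots for small $u\ne 0$, producing more than $k$ distinct eigenvalues and a contradiction. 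This buys you a shorter, more elementary finish that avoids the Sylvester-resultant computation entirely; what it gives up is the stronger, reusable output of the paper's route (the exact factorization $(t-\alpha)(t-\beta)^{n-k+1}$ with $\beta=0$, which the paper invokes again, e.g.\ in Lemma \ref{le:22}). The only step of yours that is stated informally is the ``leading-order balance'' claim that the roots of $G(\cdot,u)$ are pairwise distinct and avoid $\lambda_2,\dots,\lambda_k$ for generic $u$; this is not a gap, but to make it airtight you should either invoke Newton--Puiseux, or note that under the substitution $u=v^{q-l+1}$, $x=\lambda_k+vz$ the rescaled polynomial degenerates as $v\to 0$ to $(\lambda_k-\lambda_1)z^{q-l+1}-\sigma_{q-l}$, which has simple roots, or argue that the discriminant of $G$ in $x$ is a not-identically-zero polynomial in $u$; the exclusion of $\lambda_2,\dots,\lambda_{k-1},\lambda_k$ is then the finite-exceptional-set observation you already make, and your two-case count ($q-l<n-k$ versus $q-l=n-k\ge 1$, using $k<n$) correctly yields at least $k+1$ distinct eigenvalues in both cases.
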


\begin{proof}
Clearly we may assume $l\le n$.  Choose an arbitrary $\begin{pmatrix}
                                       0 & b^\mathrm{T} \\
                                       0 & 0
                                     \end{pmatrix}\in V'(1)$ with $b_{l}\ne0$  and $\begin{pmatrix}
                                       0 & 0 \\
                                       c & 0
                                     \end{pmatrix}\in V(-1)$.
As explained in the beginning of this section
$V$ contains a matrix of the form
{\eqref{eq:regular_example}} for some distinct $\lambda_1,\ldots ,\lambda_k\in \mathbb{C}$
\ so that
it contains

$$A({\mu})= \begin{pmatrix}
\lambda_1& &  &  & \mu b_k &\cdots  &\cdots  & \mu b_n \\
c_2&\ddots\\\vdots& &\ddots &  &  &  &  &  \\
c_{k-1}&&  & \lambda_{k-1} &  &  &  & \\
c_k&  &&  & \lambda_k & 1 &  &  \\
\vdots&  &  &&  & \ddots & \ddots & \\
\vdots&  &  &&  &  & \ddots & 1 \\
c_n&  &&  &  &  &  & \lambda_k
\end{pmatrix}$$
 for some distinct $\lambda_1,\ldots ,\lambda_k\in \mathbb{C}$ and arbitrary $\mu\in\mathbb{C}$. Our assumptions imply that this matrix has at most $k$ {distinct} eigenvalues so that {its characteristic} polynomial $\Delta(t)$, {which is computed in} Lemma \ref{le:char} {below,} has at most $k$ {distinct} zeros {for arbitrary $\mu\in \mathbb{C}$.
 As shown in Lemma  \ref{le:char} the polynomial $\Delta(t)$ is the product of $(\lambda_2-t)\cdots (\lambda_{k-1}-t)$ and a polynomial of the form ${r}_{\mu}(t)=(\lambda_1-t)(\lambda_k-t)^{n-k+1}+\mu {s}(t)$ for some polynomial ${s}$. By the assumption the numbers $\lambda_2,\ldots ,\lambda_{k-1}$ are not zeros of the polynomial ${r}_0$, hence they are not zeros of ${r}_{\mu}$ for generic 
 $\mu\in \mathbb{C}$.
 Consequently, the polynomial ${r}_{\mu}$ has at most two distinct zeros for generic 
 $\mu$. }
{Now we} replace $t$ by $t+\lambda_k$ 
{and divide by $(-1)^{n-k}$} to get the following. For generic 
 $\mu\in\mathbb{C}$ \moblue{the} polynomial
\[
    t^{n-k+2}-(\lambda_1-\lambda_k)t^{n-k+1}{-}\mu\sum_{i=0}^{n-k} \sum_{p={k}}^{{k+}i} b_pc_{n-k-i+p}\, t^i
\]
satisfies the Two zeros condition.
Using Lemma \ref{le:twozeros} we conclude that
\begin{equation}\label{eq:coefficients_two_zeros}
    \sum_{p={k}}^{{k+}i} b_pc_{n-k-i+p}=0
\end{equation}
for all $i={0,}1,\ldots,n-k$. {{}Recalling that $l$ is the smallest index with $b_l\ne 0$, t}he equalities {\eqref{eq:coefficients_two_zeros}} 
 imply that $c_q=0$ for all $q\ge l$ as desired.
\end{proof}

}

\begin{lemma}\label{le:char} $\Delta(t)=\det(A(\mu)-tI)=$
\[
  (\lambda_2-t)\cdots(\lambda_{k-1}-t)\left((\lambda_1-t)(\lambda_k-t)^{n-k+1}+ \mu\sum_{i=0}^{n-k}(-1)^{n-k+i+1}(\lambda_k-t)^i\sum_{p=k}^{k+i}b_pc_{n-k-i+p} \right).
\]
\end{lemma}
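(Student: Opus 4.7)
The plan is to peel off columns $2,\ldots,k-1$ of $A(\mu)-tI$ via a first Laplace expansion, then attack the remaining determinant by expanding along its first column and using a closed-form expression for the minors of a Jordan block with one row and one column deleted.

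Each column $i\in\{2,\ldots,k-1\}$ of $A(\mu)-tI$ has exactly one nonzero entry, the diagonal entry $\lambda_i-t$, because the $c$'s are confined to the first column, the $\mu b_p$'s to the first row, and the superdiagonal $1$'s start at the pair $(k,k+1)$. Iterated Laplace expansion along these columns therefore gives
\[
\det(A(\mu)-tI)=(\lambda_2-t)\cdots(\lambda_{k-1}-t)\cdot\det B(\mu,t),
\]
where $B(\mu,t)$ is the $(n-k+2)\times(n-k+2)$ submatrix whose first row is $(\lambda_1-t,\mu b_k,\ldots,\mu b_n)$, whose first column is $(\lambda_1-t,c_k,\ldots,c_n)^T$, and whose lower-right $(n-k+1)\times(n-k+1)$ block is the Jordan block $J$ with $\lambda_k-t$ on the diagonal and $1$'s on the superdiagonal.

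Next I would cofactor-expand $\det B(\mu,t)$ along its first column. The $(1,1)$-term yields $(\lambda_1-t)\det J=(\lambda_1-t)(\lambda_k-t)^{n-k+1}$, accounting for the $\mu$-free part of the claim. For each $r\ge 2$ the $(r,1)$-cofactor equals $(-1)^{r+1}c_{k+r-2}$ times the determinant of the $(n-k+1)\times(n-k+1)$ matrix whose top row is $(\mu b_k,\ldots,\mu b_n)$ and whose remaining rows form $J$ with row $r-1$ deleted; a second cofactor expansion along the top row reduces the inner determinant to minors of the form $\det J^{(r-1,j)}$, where $J^{(i,j)}$ denotes $J$ with row $i$ and column $j$ removed. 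A direct calculation (the case $i<j$ leaves a zero column, whereas $i\ge j$ reduces to a triangular determinant) yields
\[
\det J^{(i,j)}=\begin{cases}(\lambda_k-t)^{n-k-(i-j)},& i\ge j,\\ 0,& i<j.\end{cases}
\]

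Substituting these minors into the resulting double sum, interchanging the order of summation, and reindexing via $p=k+j-1$ with $i$ playing the role of the exponent of $\lambda_k-t$, the combined sign $(-1)^{r+1}(-1)^{j+1}$ collapses to $(-1)^{n-k+i+1}$, producing exactly the expression stated in the lemma. The main obstacle is purely bookkeeping: one has to track signs through two nested cofactor expansions and reconcile the inner index range $\{1,\ldots,r-1\}$ with the target range $\{k,\ldots,k+i\}$ after the swap of summations.
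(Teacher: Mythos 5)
Your proposal is correct and follows the paper's own argument almost verbatim: the paper also peels off columns $2,\ldots,k-1$ and then reduces the remaining determinant $\Delta_1$ to minors of the Jordan block, the only cosmetic difference being that the paper expands $\Delta_1$ first along the first row and then along the first column, whereas you expand column first, then row. One small slip in your parenthetical: for $i<j$ the minor $J^{(i,j)}$ need not contain a literal zero column (e.g.\ remove row $2$ and column $3$ from the $4\times4$ nilpotent-plus-scalar Jordan block), but its first $j-1$ columns are supported on only $j-2$ rows, so the determinant does vanish as you claim.
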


\begin{proof}
  First observe that {{the} columns} indexed by 
 $i=2,\ldots,k-1$ contain only one nonzero entry which equals $\lambda_i-t$. 
 Let us perform the usual {column} 
 expansions along all of these {{}columns} 
 consecutively to conclude 
 that
  \[
  \Delta(t)=(\lambda_2-t)\cdots(\lambda_{k-1}-t)\Delta_1(t),
  \]
  where
  \[
  \Delta_1(t)=\begin{vmatrix}
             \lambda_1-t & \mu b_k & \cdots & \cdots & \mu b_n \\
             c_k & \lambda_k-t & 1 &  & 0 \\
             \vdots & 0 & \lambda_k-t & \ddots &  \\
             \vdots & \vdots & \ddots & \ddots & 1 \\
             c_n & 0 & \cdots & 0 & \lambda_k-t
           \end{vmatrix}.
  \]
We compute $\Delta_1(t)$ by expanding it first along the first row and then along the first column. The final minor is possibly nonzero only in the case of the {($p-k+2$)}-th column and {($q-k+2$)}-th row, for
  $p,q=k,\ldots,n$, such that $p\le q$, in which case it equals $(\lambda_k-t)^{(n-k)-(q-p)}$. So,
$$\Delta_1(t)={(\lambda_1-t)(\lambda_k-t)^{n-k+1}+\mu}\sum_{k\le p\le q\le n}b_pc_q(-1)^{p+q+1} (\lambda_k-t)^{n-k+p-q}$$
 which gives the desired result after a small computation.
\end{proof}
}

\begin{cor}\label{cor:dimen}
$\dim V'(1)+\dim V(-1)\le n-1$.

\end{cor}

\begin{proof} 
The conclusion of Lemma \ref{le:dimen} implies easily the desired estimates.
\end{proof}

{
\begin{cor}\label{cor:dim}
  $\dim V(1)+\dim V(-1)\le n+k-3$
  .
\end{cor}

\begin{proof}
This follows immediately from Corollary \ref{cor:dimen}. Indeed, $\dim V(1)\le\dim V'(1) +k-2$ 
and the desired inequality follows. 
\end{proof}
}

{
We now recall that $\displaystyle\dim V= {n\choose2}+{k\choose2}+1$. It follows by Corollary \ref{cor:dim} that
$$\dim V(0)\ge \displaystyle {n\choose2}+{k\choose2}+1-n-k+3 ={n-1\choose2}+{k-1\choose2}+2.$$
 Recall that $V(0)$ is a linear space of matrices of the form $\begin{pmatrix}
   \alpha & 0 \\
   0 & D
 \end{pmatrix}$, whose lower-right corners form a space{, which we denote by $W$}, of dimension no smaller than $\displaystyle{n-1\choose2}+{k-1\choose2}+1$. On the other hand, members of \moblue{$W$
 } have no more than $k-1$ {distinct} eigenvalues. Indeed, if $D\in W$ had $k$ distinct eigenvalues, then some linear combination of the corresponding matrix $\begin{pmatrix} \alpha & 0 \\ 0 & D \end{pmatrix}\in V$ and some matrix of the form \eqref{eq:regular_example} would lie in $V$ and have at least $k+1$ distinct eigenvalues, a contradiction. {Using Theorem \ref{prop:structure_borel} we can therefore conclude that the dimension of $W$ is exactly $\displaystyle{n-1\choose 2}+{k-1\choose 2}+1$. 
 This fact implies that in all the above inequalities up to and including Corollary \ref{cor:dimen} we have equalities, more precisely:

 \begin{cor}\label{cor:dimension}
   \begin{enumerate}[(a)]
     \item $\dim\;V(0)=\displaystyle{n-1\choose2}+{k-1\choose2}+2$
     \item $\dim\;V(1)=n-l+k-1$
     \item $\dim\;V(-1)=l-2$
   \end{enumerate}
 \end{cor}

This proves that the space of lower-left corners of $V(-1)$ equals the span of $\{e_1,\ldots, e_{l-2}\}\moblue{\subseteq}\mathbb{C}^{n-1}$.
Using this result we now show a version of Lemma \ref{le:dimen} in which the roles of $b$ and $c$ are interchanged.

\begin{lemma}\label{lem:bji}
An arbitrary upper-right corner of a member of $V(1)$ is of the form ${b^T=\begin{pmatrix} b_2 & \cdots & b_{k-1} & 0 & \cdots & 0 & b_l & \cdots & b_n \end{pmatrix}}$.
\end{lemma}

 {\begin{proof}
    If $l>k$ and $\begin{pmatrix} 0 & b^T \\ 0 & 0 \end{pmatrix} \in V(1)$ is arbitrary, then a matrix
$$\begin{pmatrix}
\lambda_1 & b_2 & \cdots & b_{k-1} & b_k & \cdots & b_{l-1} & b_l & \cdots & b_n \\
0 & \lambda _2\\
\vdots & & \ddots\\
0 & & & \lambda_{k-1}\\
0 & & & & 0 & 1\\
\vdots & & & & & \ddots & \ddots \\
\mu & & & & & & \ddots & \ddots \\
0 & & & & & & & \ddots & \ddots \\
\vdots & & & & & & & & \ddots & 1\\
0 & & & & & & & & & 0
\end{pmatrix},$$
 where $\lambda _1,\ldots ,\lambda _{k-1}$ are nonzero and pairwise distinct and $\mu$ appears in the $(l-1)$-st row, belongs to $V$ for all $\mu \in \mathbb{C}$. The characteristic polynomial $\Delta(t)$ of this matrix equals $\Delta (t)=(\lambda_2-t)\cdots (\lambda_{k-1}-t)(-t)^{n-l+1}\cdot \Delta_1(t)$ where
$$\Delta_1(t)=\begin{vmatrix}
\lambda _1-t & b_k & \cdots & \cdots & b_{l-1}\\
0 & -t & 1\\
\vdots & & \ddots & \ddots \\
0 & & & \ddots & 1 \\
\mu & & & & -t
\end{vmatrix}=(\lambda_1-t)(-t)^{l-k}+(-1)^{l-k}\mu \sum_{i=0}^{l-k-1}b_{i+k}t^i.$$
Since the matrix defined above belongs to $V$, it has at most $k$ distinct eigenvalues, and as in Lemma \ref{le:dimen} we conclude that \moblue{the} polynomial $(-t)^{n-l+1}\Delta_1(t)$ has at most two distinct zeros for generic 
 $\mu \in \mathbb{C}$. Lemma \ref{le:twozeros} (applied to $(-t)^{n-l+1}\Delta_1(t)$) then implies that $\Delta_1(t)=(\lambda_1-t)(-t)^{l-k}$, so $b_i=0$ for $i=k,\ldots ,l-1$, as desired.
  \end{proof}

Recall that $W\subseteq M_{n-1}(\mathbb{C})$ is a subspace of dimension $\displaystyle {n-1\choose2}+{k-1\choose2}+1$ whose members have at most $k-1$ distinct eigenvalues.
 If $k\ge 4$}, it now follows from the inductive hypothesis that {there exists $p\in \{0,1,\ldots ,n-k+1\}$ such that} {the} members {of $W$} are simultaneously similar to matrices of the form \eqref{eq:required_form}, i.e. $\begin{pmatrix} A & B & C\\0 & D & E\\0 & 0 & F \end{pmatrix}$, with 
blocks of respective sizes $p,k-2,n-p-k+1$, where $\begin{pmatrix}
                        A & C \\
                        0 & F
                      \end{pmatrix}$ is upper triangular with constant diagonal, but other than that the {nonzero} blocks are arbitrary. {If $k=3$, then there are more similarity classes of (${n-1\choose 2}+2$)-dimensional spaces of $(n-1)\times (n-1)$ matrices having at most 2 eigenvalues, see \cite[Theorem 1.8]{dSP}.
                      However, the space $W$} contains the {$(n-1)\times (n-1)$} lower-right corner of the matrix given {by \eqref{eq:regular_example}, which has a simple eigenvalue. This additional information together with \cite[Theorem 1.8]{dSP} implies that the members of $W$ are simultaneously similar to matrices of the form \eqref{eq:required_form} even if $k=3$.

The next step is to prove that $W$ is actually {\em{equal}} to the space of matrices obtained from \eqref{eq:required_form} by interchanging the first two block rows and columns.
{

\begin{lemma}\label{lm:SE_corners_ofV_0}
The space $W$ of all $(n-1)\times (n-1)$ lower-right corners of $V(0)$ is equal to the space of all matrices of the form
$\begin{pmatrix} D & 0 & E\\B & A & C\\0 & 0 & F \end{pmatrix}$ with blocks of respective sizes $k-2$, $p$ and $n-p-k+1$ for some $p\in \{0,1,\ldots ,n-k+1\}$
where $\begin{pmatrix}A & C \\ 0 & F\end{pmatrix}$ is the sum of a scalar matrix and a strictly upper triangular matrix, and all the other nonzero blocks are arbitrary. 
\end{lemma}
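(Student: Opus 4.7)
The plan is to combine the inductive structural result (which identifies $V'$ only up to simultaneous similarity) with the presence in $V'$ of the specific matrix $D_0=\operatorname{diag}(\lambda_2,\ldots ,\lambda_{k-1})\oplus J_{n-k+1}(\lambda_k)$ (inherited from the regular element \eqref{eq:regular_example}), in order to pin $V'$ down exactly in the standard basis.

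First, by the inductive hypothesis on $k$ when $k\ge 4$, or by \cite[Theorem~1.8]{dSP} combined with the presence of the regular element $D_0\in V'$ when $k=3$, we obtain $V'=SW_pS^{-1}$ for some invertible $S$ and some $p\in\{0,1,\ldots ,n-k+1\}$, where $W_p$ denotes the space of matrices described in the lemma. This gives $V'$ two nested common invariant subspaces $U\subset U'$ of dimensions $p$ and $k-2+p$, on which (respectively on $\mathbb{C}^{n-1}/U'$) every element of $V'$ acts as a constant diagonal plus a strictly upper triangular matrix, with matching constants. Since $D_0\in V'$, both $U$ and $U'$ are $D_0$-invariant. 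The only $D_0$-invariant subspaces on which $D_0$ has constant diagonal are those contained in a single simple eigenspace for some $\lambda_i$ ($i\le k-1$) or those equal to an initial segment of the Jordan chain for $\lambda_k$; a short case analysis using the matching-constant condition together with the prescribed dimensions then forces $U=\operatorname{span}(e_{k-1},\ldots ,e_{k-2+p})$ and $U'=\operatorname{span}(e_1,\ldots ,e_{k-2+p})$, with matching constant $\lambda_k$.

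Consequently $S$ itself preserves the flag $U\subset U'$ in the standard basis, so it takes the block form $\bigl(\begin{smallmatrix} S_{11} & 0 & S_{13}\\ S_{21} & S_{22} & S_{23}\\ 0 & 0 & S_{33}\end{smallmatrix}\bigr)$ with block sizes $(k-2,p,n-p-k+1)$; the restriction $V'|_U$ then equals $S_{22}\,T\,S_{22}^{-1}$, where $T\subseteq M_p$ is the space of upper triangular matrices with constant diagonal. Since $V'|_U$ also contains $D_0|_U=J_p(\lambda_k)$, the next step is to apply the Fact stated immediately before the lemma to the strictly upper triangular rank-$(p-1)$ matrix $J_p(\lambda_k)-\lambda_kI_p$; combined with the standard description of the centraliser of $J_p(\lambda_k)$ as the upper triangular Toeplitz matrices, this shows that $S_{22}$ may be taken upper triangular, equivalently that $V'|_U$ equals $T$ (and not merely is similar to it) in the standard basis of $U$. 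The same reasoning handles the action $V'|_{\mathbb{C}^{n-1}/U'}$, forcing $S_{33}$ to be upper triangular as well.

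Finally, with $S$ of the above block form and $S_{22},S_{33}$ upper triangular, a direct check shows that conjugation by $S$ preserves the defining constraints of $W_p$, so $S\in\operatorname{Norm}(W_p)$ and therefore $V'=SW_pS^{-1}=W_p$ in the standard basis. The identical argument applied to $V_\infty(0)$ yields the same conclusion with a possibly different parameter $p$. The main obstacle in this plan is the case analysis that identifies $U$ and $U'$ uniquely in the standard basis: small-dimensional alternatives for $U$ (such as $U=\mathbb{C}e_i$ for a simple eigenspace of $D_0$ when $p=1$) must be excluded by showing that they would violate the matching-constant condition once the multiplicity of $\lambda_k$ as an eigenvalue of $D_0$ is taken into account.
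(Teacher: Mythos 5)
Your proposal is correct and follows essentially the same route as the paper: both start from the similarity $V'=SW_pS^{-1}$ supplied by the induction hypothesis (or by \cite[Theorem 1.8]{dSP} together with the regular corner when $k=3$), pin down the structure of the conjugating matrix using the special element $D_0$ and the Fact about upper-triangular similarity to a nilpotent Jordan block, and conclude that such a conjugating matrix normalizes the model space, so $V'$ equals it. The only (minor) difference is that you obtain the vanishing blocks of $S$ by identifying the canonical invariant flag via $D_0$-invariance and eigenvalue multiplicities, whereas the paper reads them off blockwise from the intertwining relation $PL=MP$ through Sylvester-equation arguments with disjoint spectra; these amount to the same use of the spectral structure of $D_0$.
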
}

\begin{proof}
 Recall that the space $W$ is simultaneously similar to the space of matrices described in the lemma. Let $P$ be an invertible matrix that provides this similarity.
 The space $W$ contains the lower-right $(n-1)\times(n-1)$ corner of a {{}regular} matrix of the form \eqref{eq:regular_example} 
 such that
 \[ P\ \begin{pmatrix}
                                                 \lambda_2 &  &  &  &  &  &  \\
                                                 & \ddots &  &  &  &  &  \\
                                                 &  & \lambda_{k-1} &  &  &  & \\
                                                 &  &  & \lambda_k & 1 &  &  \\
                                                 &  &  &  & \ddots & \ddots & \\
                                                 &  &  &  &  & \ddots & 1 \\
                                                 &  &  &  &  &  & \lambda_k
                                               \end{pmatrix}=\begin{pmatrix} D & 0 & E\\B & A & C\\0 & 0 & F \end{pmatrix}P.
 \]
 Here, $A$ and $F$ are upper triangular with the same constant, say $\lambda$, on the diagonal. Denote this matrix of the form \eqref{eq:regular_example} by $L$ and the $3\times3$ block matrix on the right by $M$. Since the two matrices are similar and $\lambda_k$ is the only multiple eigenvalue of $L$ and $\lambda$ is a multiple eigenvalue of $M$, we have that $\lambda_k=\lambda$. Consequently, the eigenvalues of $D$ are $\lambda_2,\ldots,\lambda_{k-1}$. Write $P$ with blocks of respective sizes $k-2,p,n-p-k+1$ as $P=\begin{pmatrix}
                                                                 Q & R & S \\
                                                                 {N} & U & T \\
                                                                 X & Y & Z
                                                               \end{pmatrix}$ to get
 \begin{equation}\label{eq:similarity}
    \begin{pmatrix}
    Q & R & S \\
    {N} & U & T \\
    X & Y & Z
    \end{pmatrix}\begin{pmatrix}
                   D' & 0 & 0 \\
                   0 & \lambda I+J_1 & E_{p1} \\
                   0 & 0 & \lambda I+J_2
                 \end{pmatrix}=\begin{pmatrix} D & 0 & E\\B & A & C\\0 & 0 & F
                 \end{pmatrix} \begin{pmatrix}
    Q & R & S \\
    {N} & U & T \\
    X & Y & Z
    \end{pmatrix},
 \end{equation}
 where $D'=\mathrm{Diag}(\lambda_2,\ldots,\lambda_{k-1})$ { and $J_1,J_2$ are nilpotent Jordan blocks of appropriate sizes}.
 The $(3,1)$-block of equation \eqref{eq:similarity} equals $XD'=FX$. Since the intersection of the spectra of $D'$ and $F$ is empty, we conclude that $X=0$ \cite[Section VIII.1]{Gant}. We rewrite the $(3,2)$\moblue{-} and $(3,3)$\moblue{-}block of equation \eqref{eq:similarity} into
 \[
   \begin{pmatrix}
     Y & Z
   \end{pmatrix} J=(F-\lambda I) \begin{pmatrix}
     Y & Z
   \end{pmatrix},
 \]
 where $J$ is a {nilpotent} Jordan block of appropriate size. It follows inductively on $l$ that
 \[
   \begin{pmatrix}
     Y & Z
   \end{pmatrix} J^l=(F-\lambda I)^l \begin{pmatrix}
     Y & Z
   \end{pmatrix}.
 \]
 So, the matrix $\begin{pmatrix}
     Y & Z
   \end{pmatrix}$ maps $\mathrm{Im}\;J^l$ into $\mathrm{Im}\;(F-\lambda I)^l$ which is included in $\mathrm{Span}\{e_1,\ldots, e_{n-p-k+1-l}\}$ for all positive integers $l$.
   This readily yields that $Y=0$ and that $Z$ is upper triangular.

 Next, we consider the $(1,2)$\moblue{-}block of equation \eqref{eq:similarity} to get $R(\lambda I+ J_1)=DR$. Since the spectrum of $D$ does not contain $\lambda$, we determine that $R=0$. Block equation $(2,2)$ now implies that $UJ_1=(A-\lambda I)U$. 
 As above we deduce that $U$ is upper triangular. Finally, we conclude that $W$ is the space of all matrices of the form
 \[
   \begin{pmatrix} D & 0 & E\\B & \lambda I+ A' & C\\0 & 0 & \lambda I+ F'
                 \end{pmatrix},
 \]
 where $A'$ and $F'$ are strictly upper triangular. Indeed, we know that space $W$ \moblue{is} simultaneously similar to \moblue{-the space of} matrices of this form, while we proved here that \moblue{a} similarity matrix is also of this block form with $(2,2)$\moblue{-} and $(3,3)$\moblue{-}blocks upper triangular.
\end{proof}
{
{
Let us now write matrices with respect to the block partition of respective sizes $1$, $k-2$, $p$ and $n-p-k+1$. Then { it follows from the above lemma and the equality $\dim V(0)=\displaystyle{n{-1}\choose 2}+{k{-1}\choose 2}+2=\dim W+1$ that} $V(0)$ consists of all matrices of the form
\begin{equation}\label{eq:form}
  \begin{pmatrix}
             \alpha & 0  & 0  &  0 \\
             0  & D & 0 & E \\
             0  & B & \lambda I+A' & C \\
             0  & 0 & 0 & \lambda I+F'
           \end{pmatrix},
\end{equation}
where $\alpha,\lambda, D, E, B, C$ are arbitrary and $A', F'$  are strictly upper triangular. Also, $V(1)$ respectively $V(-1)$ consists of some matrices of the form
\[
    \begin{pmatrix}
      0 & b'^{\mathrm{T}} & b''^{\mathrm{T}}  & b'''^{\mathrm{T}}  \\
      0 & 0 & 0 & 0 \\
      0 & 0 & 0 & 0 \\
      0 & 0 & 0 & 0
    \end{pmatrix}\quad\mbox{respectively}\quad \begin{pmatrix}
      0 & 0 & 0 & 0 \\
      c' & 0 & 0 & 0 \\
      c'' & 0 & 0 & 0 \\
      c''' & 0 & 0 & 0
    \end{pmatrix}.
\]
{ We now determine the structure of the spaces $V(1)$ and $V(-1)$.}
{
\begin{lemma}\label{le:22}
  For any matrix in $V$ the blocks $b''$ and $c'''$ are zero.
\end{lemma}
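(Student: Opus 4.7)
The plan is to derive the vanishing of $b''$ and $c'''$ via two dual perturbation arguments exploiting, respectively, the two free off-diagonal blocks $B\in M_{p\times(k-2)}(\mathbb{C})$ at position $(3,2)$ and $E\in M_{(k-2)\times(n-p-k+1)}(\mathbb{C})$ at position $(2,4)$ guaranteed by the structure~\eqref{eq:form} of $V_0(0)$. In each case I would construct a one-parameter family of matrices in $V_0$, compute the characteristic polynomial by a block Schur complement, and use the ``at most $k$ eigenvalues'' condition together with irreducibility of the corresponding affine space of matrices to force a linear-in-the-block expression to vanish identically.

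To prove $b''=0$, I would take $M=M_0+M_n+M_{-n}\in V_0$ with $M_0\in V_0(0)$ chosen to have diagonal $\mathrm{diag}(\lambda_1,\lambda_2,\ldots,\lambda_{k-1},\lambda_k,\ldots,\lambda_k)$, an arbitrary $B$ at position $(3,2)$, and all other free blocks of~\eqref{eq:form} equal to zero; $M_n\in V_0(n)$ an arbitrary element with first row $(0,b'^T,b''^T,b'''^T)$; and $M_{-n}\in V_0(-n)$ with first column $(0,c'^T,0,0)^T$ for arbitrary $c'\in\mathbb{C}^{k-2}$ (such $c$ exists because the projection $V_0(-n)\to\mathbb{C}^{k-2}$, $c\mapsto c'$, is surjective, as a consequence of the tight dimension equality $\dim V_0(-n)=\dim V_0'(-n)+(k-2)$ that follows from Corollary~\ref{cor:dim}). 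After a row operation to kill the $b'''$-entries in the top-right corner (harmless on column~$1$ since $c'''=0$), block $4$ decouples and contributes a factor $(\lambda_k-t)^{n-p-k+1}$; a Schur complement against $D-tI$ in the remaining $3\times 3$ block then yields, after clearing the denominator $\det(D-tI)$,
\[
  \det(M-tI)=(\lambda_k-t)^{n-k}R_B(t),
\]
where $R_B$ is a polynomial of degree $k$ in $t$ with
\[
  R_B(\lambda_k)=\sum_{i=2}^{k-1}c'_i(b''^TB_i)\prod_{j\ne i}(\lambda_j-\lambda_k),
\]
the $b'c'$-contribution carrying an extra factor $(\lambda_k-t)$ that kills it at $t=\lambda_k$, and $B_i$ denoting the $i$-th column of $B$. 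Since $M\in V_0\subseteq X$, for every $B$ we must have either $R_B(\lambda_k)=0$ or $\mathrm{disc}(R_B)=0$. At $B=0$ we get $R_0(t)=\prod_{j=1}^k(\lambda_j-t)$ with $k$ simple roots, so $\mathrm{disc}(R_B)$ is not identically zero on the affine space $M_{p\times(k-2)}(\mathbb{C})$. Because this affine space is irreducible, it cannot be covered by two proper Zariski-closed subsets, so the linear map $B\mapsto R_B(\lambda_k)$ must vanish identically in $B$. Reading off the coefficient of each entry of each $B_i$ and using $\prod_{j\ne i}(\lambda_j-\lambda_k)\ne 0$ gives $c'_ib''=0$ for every $i$; choosing $c$ with some $c'_i\ne 0$ forces $b''=0$. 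Since $M_n\in V_0(n)$ was arbitrary, this yields $b''=0$ for every element of $V_0$.

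For $c'''=0$ one runs the entirely dual argument: take $M_0\in V_0(0)$ with the same diagonal but now with an arbitrary $E$ at position $(2,4)$ and all other free blocks zero; take $M_n\in V_0(n)$ arbitrary; and take $M_{-n}\in V_0(-n)$ with first column $(0,0,0,c'''^T)^T$ for arbitrary $c'''$. An analogous Schur-complement computation produces $\det(M-tI)=(\lambda_k-t)^{n-k}R'_E(t)$ with
\[
  R'_E(\lambda_k)=\sum_{i=2}^{k-1}b'_i(E_{i\cdot}c''')\prod_{j\ne i}(\lambda_j-\lambda_k)-b'''^Tc'''\prod_{j=2}^{k-1}(\lambda_j-\lambda_k),
\]
and the same irreducibility/discriminant dichotomy forces this (viewed as a function of $E$) to vanish identically. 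The $E$-linear coefficient at the $(i,l)$-entry is $b'_ic'''_l\prod_{j\ne i}(\lambda_j-\lambda_k)$, so together with the surjectivity of the projection $V_0(n)\to\mathbb{C}^{k-2}$, $b\mapsto b'$, one gets $c'''=0$ (the $E$-constant part $b'''^Tc'''$ then vanishes automatically). The main obstacle will be carrying out the Schur-complement bookkeeping cleanly and recognizing that the factor $(\lambda_k-t)^{n-k}$ in the characteristic polynomial—which comes from the $(n-k+1)$-fold appearance of $\lambda_k$ on the diagonal of~\eqref{eq:regular_example}—is exactly what makes the ``either $R(\lambda_k)=0$ or $R$ has a repeated root'' dichotomy strong enough, via irreducibility of the affine space of free blocks, to force the linear map into a single alternative.
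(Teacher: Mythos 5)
Your approach is genuinely different from the paper's. The paper deduces $b''=0$ and $c'''=0$ by introducing the index $l$ of Lemma~\ref{le:dimen} (the smallest index $\geq k$ at which some member of $V_0'(n)$ has a nonzero entry), proving $l=k+p$ by a two-sided dimension-counting contradiction, and then reading off the vanishing of the middle block of $b$ and the tail block of $c$ from Lemma~\ref{le:dimen} and its $b\leftrightarrow c$-interchanged version. You instead set up a one-parameter family by letting the free $B$ block (resp.\ $E$ block) of $V_0(0)$ vary, compute the characteristic polynomial by a Schur complement against $D-tI$, and invoke the irreducibility of the affine space of $B$'s (resp.\ $E$'s) to force the dichotomy $R_B(\lambda_k)=0$ or $\mathrm{disc}(R_B)=0$ into a single alternative. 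The formula you derive for $R_B(\lambda_k)$ is correct, and the dichotomy argument is sound; this is a cleaner direct route than the paper's detour through $l=k+p$, and it relies only on Lemma~\ref{lm:SE_corners_ofV_0} and Lemma~\ref{open_conditions}.

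However, there is a genuine gap in how you pick the $V_0(-n)$ element. You require a member with SW block $(0,c'^T,0,0)^T$ and justify it by surjectivity of the projection $c\mapsto c'$; but surjectivity only produces, for each $c'$, \emph{some} $c\in V_0(-n)$ with that $c'$, not one whose $c''$ and $c'''$ components vanish. Worse, in the second step you restrict to $c=(0,0,0,c'''^T)^T$, so even if such a $c$ existed you would only prove $c'''=0$ on the subspace of $V_0(-n)$ where $c'=c''=0$, not for a general member, which is what the lemma asserts. The correct repair is to carry an \emph{arbitrary} $c\in V_0(-n)$ through the computation: the additional $c''$ and $c'''$ contributions enter $R_B(\lambda_k)$ (resp.\ $R'_E(\lambda_k)$) only as $B$-independent (resp.\ $E$-independent) constants (once $b''=0$ is known, block~$3$ decouples and $c''$ drops out of the $E$-computation entirely), so the linear-in-$B$ and linear-in-$E$ coefficients are exactly the products $c'_ib''$ and $b'_ic'''_\ell$ you wrote down, and the conclusion follows as before. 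With arbitrary $c$ your interim claim $R_0(t)=\prod_{j=1}^k(\lambda_j-t)$ is no longer literally true (there are $b'_ic'_i$ and $b''^Tc''+b'''^Tc'''$ terms), but since $\lambda_1,\ldots,\lambda_{k-1},\lambda_k$ may be chosen freely from $V_0(0)$, one still gets $\mathrm{disc}(R_0)\neq 0$ for a generic choice, which is all the irreducibility argument needs.
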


\begin{proof}
  Here is a simplified notation for the first row and column that will be useful
\[
    b=\begin{pmatrix}
        b' \\
        b'' \\
        b'''
      \end{pmatrix}\quad\mbox{and}\quad c=\begin{pmatrix}
        c' \\
        c'' \\
        c'''
      \end{pmatrix}.
\]
{As in Lemma \ref{le:dimen} let $l\ge k$ be the smallest index such that some row $b^T=\begin{pmatrix}
     0 & \cdots & 0 & b_k & \cdots &   b_n
   \end{pmatrix}$
with $b_l\ne 0$ equals the upper-right corner of a member of $V'(1)$, with the convention $l=n+1$ if $V'(1)$ is trivial.}
By that lemma {the entries of any lower-left corner $c=\begin{pmatrix} c_2 \\ \vdots \\ c_n \end{pmatrix}$ of a member of $V(-1)$ satisfy} 
 $c_q=0$ for all 
 $q\ge l$. 
First, we want to show that
\[
    l\ge k+p. \quad\quad\quad\quad (*)
\]
Towards a contradiction we assume that $l<k+p$. {In particular, we have $p>0$ and $l\le n$.} Choose a matrix of the form \eqref{eq:form} with $\alpha=\lambda_1$, $D=\mathrm{Diag}(\lambda_2,\ldots,\lambda_{k-1})$, where $\lambda_i$'s are {nonzero and} distinct, $B=E_{l-k+1,s{-1}}$ { for some $s$, $2\le s\le k-1$}, and all the other blocks are zero. We add $c$ and $\mu b$ {described above} to this matrix and compute the resulting characteristic polynomial
\[
    \Delta(t)=\begin{vmatrix}
\lambda_1-t & 0 & \cdots &  0 &0&\cdots&0 & \mu b_l & \cdots &  \mu b_n \\
c_2 & \lambda_2-t &&&&&&&& \\
\vdots&& \ddots &  & &  &  & &&  \\
c_{k-1}&&  & \lambda_{k-1}-t & &&& &  &   \\
c_k&&  &  & -t & &&& &    \\
\vdots&&&&& \ddots&&&& \\
c_{l-1}&&&&&& -t&&&\\
0&& 1 &  &  &&& \ddots & & \\
\vdots&&&&  &  &  & &  \ddots  &\\
0&&&&  &  &  &  &  & -t
\end{vmatrix};
\]
observe that the matrix under this determinant belongs to $V$.
The isolated entry $1$ was set in the $l$-th row and the $s$-th column. 
 We expand the determinant at all rows and columns that contain only one nonzero entry:
\[
    \Delta(t)=(-t)^{n-k} \prod_{\substack{i=2\\i\neq s}}^{k-1} (\lambda_i-t)\cdot \begin{vmatrix}
                \lambda_1-t & 0 &  \mu b_l \\
                c_s & \lambda_s-t & 0 \\
                0 &   1   & -t
              \end{vmatrix}=(-t)^{n-k}q_\mu(t),
\]
where we introduce
\[
    q_\mu(t)=\prod_{\substack{i=2\\i\neq s}}^{k-1}(\lambda_i-t)\ \ ((\lambda_1-t)(\lambda_s-t)(-t)+\mu b_l c_s).
\]
So, if $\mu=0$, then $q_\mu$ has $k$ distinct zeros. Consequently, by Lemma \ref{open_conditions}(b) the polynomial $q_\mu$ has $k$ distinct zeros for generic $\mu$. If $q_\mu(0)\ne0$ for such $\mu$, then the polynomial $\Delta$ has $k+1$ zeros, a contradiction with the standing assumption on $V$. Therefore, generically the polynomial $q_\mu$ has $k$ distinct roots, one of which is zero. 
This implies that $b_lc_s=0$, and since $b_l\neq0$ we have $c_s=0$. Now, in this consideration $s$ is chosen arbitrary from the set $\{2,\ldots,k-1\}$, so $c_2,\ldots,c_{k-1}$ are all equal to zero.

We have shown that any first column of a member of $V(-1)$ is of the form $c=\begin{pmatrix}
                                                                                 0 & \cdots & 0 & c_k & \cdots & c_{l-1} & 0 & \cdots & 0
                                                                               \end{pmatrix}^T$\moblue{,}
 so that $\dim V(-1)\le l-k$\moblue{,} 
 contradicting Corollary \ref{cor:dimension}. 
 This shows that 
 Condition $(*)$  $l\ge k+p$ holds.

Next, we want to show that $l=k+p$.
Choose $c$ with $1$ in the $(l-2)$-th position (i.e. $c_{l-1}=1$) and zeros elsewhere. Assume towards a contradiction that $l>k+p$ {(and hence $p\ne n-k+1$ and $l>k$)} and repeat the above arguments with the roles of ${b}$ and ${c}$ interchanged. Consider a member of $V$ { of the form \eqref{eq:form} with $\alpha =\lambda_1$ and $D=\mathrm{Diag}(\lambda_2,\ldots ,\lambda_{k-1})$ where} $\lambda_1, \ldots, \lambda_{k-1}$ { are nonzero and pairwise distinct, and } with a $1$ in the $(l-1)$-st column and $s$-th row { for $s\in \{2,\ldots ,k-1\}$}, and with zeros everywhere else. 
Add to this matrix $c$ and $\mu b^T$ where $b^T$ is an arbitrary upper-right corner of $V(1)$ and $c$ is as above.
Recall that $b_k=\cdots=b_{l-1}=0$ by Lemma \ref{lem:bji}.
Computations as above reveal that the characteristic polynomial of this matrix is equal to
\[
	\Delta(t)=(-t)^{n-k} \prod_{\substack{i=2,\\i\neq s}}^{k-1}(\lambda_i-t)\cdot \begin{vmatrix}
                \lambda_1-t &  \mu b_s & 0  \\
                0 & \lambda_s-t & 1 \\
                1 &   0   & -t
              \end{vmatrix},
\]
where the last determinant on the right hand side equals $(\lambda_1-t)(\lambda_s-t)(-t)+\mu b_s$. As before we conclude that $b_s=0$ for all possible $s\in\{2,\ldots,k-1\}${, which implies $\dim V\le \displaystyle{n\choose 2}+{k\choose 2}-k+3$}. The contradiction so obtained brings us to the fact that $l=k+p$, which proves the lemma.
\end{proof}
}

The above lemma concludes the proof that a space $V\in F_m(X)$ containing a matrix of type \eqref{eq:regular_example} and satisfying condition {\bf AA} consists of all matrices of the form
$\begin{pmatrix}
D & 0 & E\\
B & A & C\\
0 & 0 & F
\end{pmatrix}$ with blocks of respective sizes $k-1$, $p$ and $n-p-k+1$, where
$\begin{pmatrix}
A & C\\
0 & F
\end{pmatrix}$ is upper triangular matrix with equal diagonal entries and all the other nonzero blocks are arbitrary.

{
\section{Structure of the spaces of maximal dimension}\label{sec:max}

In this section
we will prove the second part of Theorem \ref{thm:main}, i.e., the following theorem.

\begin{thm}
  Let {{}$3\le k<n$ and let }$V$ be a {subspace of $M_n(\mathbb{C})$ of dimension $m=\displaystyle {n\choose 2}+{k\choose 2}+1$ such that each member of $V$ has at most $k$ distinct eigenvalues.} 
 Then 
 there exists $p\in \{0,1, \ldots ,n-k+1\}$ such that $V$ is {{}simultaneously} similar to the space of all matrices of the form \eqref{eq:required_form} where $B\in M_{p\times (k-1)}(\mathbb{C})$, $D\in M_{k-1}(\mathbb{C})$ and $E\in M_{(k-1)\times (n-k-p+1)}(\mathbb{C})$ are arbitrary and
  $\begin{pmatrix} A & C\\0 & F \end{pmatrix}$ is an arbitrary upper triangular matrix with equal diagonal entries.
\end{thm}

Let $V$ be a space satisfying the conditions of the theorem.
Recall from the beginning of the previous section that we may assume that $V$ contains a matrix of the form \eqref{eq:regular_example} for some distinct $\lambda _1,\ldots ,\lambda_k\in \mathbb{C}$. 
Consider a block partition of $V$ with respect to dimensions $1$ and $n-1$. We define the projection $\pi_0$ from $V$ to the lower-left corner as $\pi_0:\begin{pmatrix}
                                                                                      \alpha & b^T\\
                                                                                      c & D
                                                                                    \end{pmatrix}
                                                                                    \mapsto \begin{pmatrix}
                                                                                     0  & 0\\
                                                                                     c & 0
                                                                                    \end{pmatrix}$. Clearly, $\ker\pi_0$ consists of all members of $V$ of the form $\begin{pmatrix}
                                                                                      \alpha & b^T\\
                                                                                      0 & D
                                                                                    \end{pmatrix}$.
Next we define the projection $\pi_0'$ from $\ker\;\pi_0$ to the diagonal blocks  as $\pi_0':\begin{pmatrix}
                                                                                      \alpha & b^T\\
                                                                                      0 & D
                                                                                    \end{pmatrix}
                                                                                    \mapsto \begin{pmatrix}
                                                                                     \alpha  & 0\\
                                                                                     0 & D
                                                                                    \end{pmatrix}$. Now, $\ker\pi_0'$ consists of all members of $V$ of the form $\begin{pmatrix}
                                                                                      0& b^T\\
                                                                                      0 & 0
                                                                                    \end{pmatrix}$.
Let
\[
  V_0=\mathrm{im}\, \pi_0\oplus\mathrm{im}\, \pi'_0\oplus\ker\pi_0'.
\]
It is clear that $\dim V_0=\dim V$.

\begin{lemma}\label{le:v0}
  Elements of $V_0$ have at most $k$ distinct eigenvalues.
\end{lemma}

\begin{proof}
  Let $\begin{pmatrix}
                                                                                      \alpha & b^T\\
                                                                                      c & D
                                                                                    \end{pmatrix}$
  be an arbitrary matrix in $V_0$. Then $\begin{pmatrix}
                                                                                      0& b^T\\
                                                                                      0 & 0
                                                                                    \end{pmatrix}\in \ker \pi_0'\subseteq V$ and
  $\begin{pmatrix}
                                                                                      \alpha & 0\\
                                                                                      0 & D
                                                                                    \end{pmatrix}\in \mathrm{im}\, \pi_0'$.
  So there exists $b'\in\mathbb{C}^{n-1}$ such that $\begin{pmatrix}
                                                                                      \alpha & b'^T\\
                                                                                      0 & D
                                                                                    \end{pmatrix}\in \ker \pi_0\subseteq V$.
Finally, $\begin{pmatrix}
                                                                                      0 & 0\\
                                                                                      c & 0
                                                                                    \end{pmatrix}\in \mathrm{im}\,\pi_0$,
so there exist $\alpha''\in\mathbb{C}, b''\in \mathbb{C}^{n-1}$, and $D''\in M_{n-1}(\mathbb{C})$ such that $\begin{pmatrix}
                                                                                      \alpha'' & b''^T\\
                                                                                      c & D''
                                                                                    \end{pmatrix}\in V$.
Members of $V$ have at most $k$ distinct eigenvalues, therefore the matrix
\[
\begin{split}
     & \phantom{=} \begin{pmatrix}
                                                                                      t & 0\\
                                                                                      0 & I
                                                                                    \end{pmatrix} \left(t \begin{pmatrix}
                                                                                      \alpha'' & b''^T\\
                                                                                      c & D''
                                                                                    \end{pmatrix}+\begin{pmatrix}
                                                                                      \alpha & b'^T\\
                                                                                      0 & D
                                                                                    \end{pmatrix}+t^{-1} \begin{pmatrix}
                                                                                      0 & b^T\\
                                                                                      0 & 0
                                                                                    \end{pmatrix}\right)\begin{pmatrix}
                                                                                      t^{-1} & 0\\
                                                                                      0 & I
                                                                                    \end{pmatrix}\\
     & = \begin{pmatrix}
                                                                                      \alpha+t\alpha'' & b^T+tb'^T+t^2 b''^T \\
                                                                                      c & D+tD''
                                                                                    \end{pmatrix}
\end{split}
\]
has at most $k$ distinct eigenvalues for each $t\neq0$. Consequently, the starting matrix has at most $k$ distinct eigenvalues by Lemma \ref{open_conditions}(b).
\end{proof}

 Next, we define the projection $\pi_\infty$ from $V$ to the upper-right corner as $\pi_\infty:\begin{pmatrix}
                                                                                      \alpha & b^T\\
                                                                                      c & D
                                                                                    \end{pmatrix}
                                                                                    \mapsto \begin{pmatrix}
                                                                                     0  & b^T\\
                                                                                     0 & 0
                                                                                    \end{pmatrix}$. Clearly, $\ker\pi_\infty$ consists of all members of $V$ of the form $\begin{pmatrix}
                                                                                      \alpha & 0\\
                                                                                      c & D
                                                                                    \end{pmatrix}$.
We define the projection $\pi_\infty'$ from $\ker\;\pi_\infty$ to the diagonal blocks  as $\pi_\infty':\begin{pmatrix}
                                                                                      \alpha & 0\\
                                                                                      c & D
                                                                                    \end{pmatrix}
                                                                                    \mapsto \begin{pmatrix}
                                                                                     \alpha  & 0\\
                                                                                     0 & D
                                                                                    \end{pmatrix}$. Now, $\ker\pi_\infty'$ consists of all members of $V$ of the form $\begin{pmatrix}
                                                                                      0& 0\\
                                                                                      c & 0
                                                                                    \end{pmatrix}$.
Let
\[
  V_\infty=\mathrm{im}\, \pi_\infty\oplus\mathrm{im}\, \pi'_\infty\oplus\ker\pi_\infty',
\]
so that again $\dim V_\infty=\dim V$. Similar arguments as in the proof of Lemma \ref{le:v0} show that all members of $V_\infty$ have at most $k$ distinct eigenvalues. Let us point out that the so defined spaces $V_0$ and $V_\infty$ satisfy condition \textbf{AA} from the beginning of Section \ref{sec3}.

\textbf{Remark.} Motivation for the definition of spaces $V_0$ and $V_\infty$ comes from Algebraic Geometry and Representation Theory. Recall the group homomorphism 
 ${\phi}:{\mathbb{C}^* \rightarrow\mathrm{GL}_n}$ 
 defined by
$t\mapsto\begin{pmatrix}
           t &  0  \\
           0  & I
         \end{pmatrix}$. {For each $t\in \mathbb{C}^*$ the space ${\phi} (t)V{\phi} (t)^{-1}$ is $m$-dimensional and with elements having at most $k$ distinct eigenvalues, so it belongs to the Fano variety $F_m(X)$. However, $F_m(X)$ is a projective variety, hence there exist limits} 
$$\displaystyle V_0=\lim_{t\rightarrow 0} {{}\phi}(t)V{{}\phi}(t)^{-1}\quad \mathrm{and}\quad \displaystyle V_{\infty}=\lim_{t\rightarrow 0} {{}\phi}(t)^{-1}V {{}\phi}(t)$$
{in $F_m(X)$}.  
A short computation reveals that $V_0$ is ${{}\phi}$-stable; indeed,
$$\displaystyle 
{{}\phi}(s)V_{0}{{}\phi}(s)^{-1}={{}\phi}(s)\lim_{t\rightarrow 0} {{}\phi}(t)V{{}\phi}(t)^{-1} {{}\phi}(s)^{-1}=\lim_{t\rightarrow 0} {{}\phi}(st)V{{}\phi}(st)^{-1}=V_0.$$ The same considerations apply to $V_{\infty}$.
Therefore, both spaces are ${\mathbb{C}^*}$-modules for the action 
{$(t,A)\mapsto \phi(t)A\phi(t)^{-1}$.} Observe that this is equivalent to Condition \textbf{AA}.

\textbf{Remark.} To show that the two definitions of $V_0$ are equivalent choose the following basis of the space $V$
 : $\begin{pmatrix}    \alpha_1 & b_1^{\mathrm{T}}  \\    c_1 & D_1  \end{pmatrix},\cdots,$ $\begin{pmatrix}    \alpha_r & b_r^{\mathrm{T}}  \\    c_r & D_r  \end{pmatrix},$ $\begin{pmatrix}\alpha_{r+1} & b_{r+1}^{\mathrm{T}} \\ 0 & D_{r+1} \end{pmatrix},\cdots,$ $\begin{pmatrix}    \alpha_{r+s} & b_{r+s}^{\mathrm{T}}  \\ 0 & D_{r+s} \end{pmatrix},$ $\begin{pmatrix}0 & b_{r+s+1}^{\mathrm{T}} \\ 0 & 0 \end{pmatrix},\cdots,$ $\begin{pmatrix} 0 & b_{m}^{\mathrm{T}}  \\ 0 & 0 \end{pmatrix}.$ Here, $r$ and $s$ are chosen consecutively the maximal possible so that $c_1,\ldots,c_r$ are linearly independent and that $\begin{pmatrix}\alpha_{r+1} & 0 \\ 0 & D_{r+1} \end{pmatrix}, \cdots,\begin{pmatrix} \alpha_{r+s} & 0  \\ 0 & D_{r+s} \end{pmatrix}$ are linearly independent.
  We denote the basis elements 
   by $B_1,\ldots,B_m$
   . Recall that $V$ is represented in $\mathrm{Gr}(m,n^2)\subseteq \mathbb{P}\left(\wedge^m(\mathbb{C}^{n^2})\right)$ by the class $\left[\bigwedge_{i=1}^mB_i\right]${, which is independent of the choice of the basis} (cf.\ \cite[Chapter 6]{Har}).  In order to get the basis of space $V_0$, we compute the limits of classes within the Grassmanian determined by exterior {products} 
 of basis elements:
  \[
    \begin{split}
       \left[ V_0 \right] & = \lim_{t\rightarrow0} \left[ \bigwedge_{i=1}^m {\phi}(t)B_i{\phi}(t)^{-1} \right] = \lim_{t\rightarrow0} \left[ \bigwedge_{i=1}^m  \begin{pmatrix}
                                           \alpha_i & tb_i^T \\
                                           t^{-1}c_i & D_i
                                         \end{pmatrix}   \right] \\
         & =\lim_{t\rightarrow0} \left[ \bigwedge_{i=1}^r  \begin{pmatrix}
                                           t\alpha_i & t^2b_i^T \\
                                           c_i & tD_i
                                         \end{pmatrix}  \wedge  \bigwedge_{i=r+1}^{r+s} \begin{pmatrix}
                                           \alpha_i & tb_i^T \\
                                           0 & D_i
                                         \end{pmatrix}  \wedge  \bigwedge_{i=r+s+1}^m  \begin{pmatrix}
                                           0 & b_i^T \\
                                           0 & 0
                                         \end{pmatrix}
                                          \right]\\
         & = \left[ \bigwedge_{i=1}^r  \begin{pmatrix}
                                           0 & 0\\
                                           c_i & 0
                                         \end{pmatrix}  \wedge  \bigwedge_{i=r+1}^{r+s} \begin{pmatrix}
                                           \alpha_i & 0 \\
                                           0 & D_i
                                         \end{pmatrix}  \wedge  \bigwedge_{i=r+s+1}^m  \begin{pmatrix}
                                           0 & b_i^T \\
                                           0 & 0
                                         \end{pmatrix}
                                          \right].
    \end{split}
  \]
Note that the elements of the above exterior product are indeed linearly independent, so a basis of $V_0$ 
 is given by
 \begin{equation*}\label{eq:basisV_0}
\begin{pmatrix}           0 & 0 \\      c_1 & 0    \end{pmatrix}, \cdots, \begin{pmatrix}      0 & 0 \\         c_r & 0   \end{pmatrix}, \begin{pmatrix}    \alpha_{r+1} & 0 \\     0 & D_{r+1}    \end{pmatrix}, \cdots, \begin{pmatrix}   \alpha_{r+s} & 0 \\    0 & D_{r+s}     \end{pmatrix}, \begin{pmatrix}   0 & b_{r+s+1}^T \\   0 & 0  \end{pmatrix},\cdots,\begin{pmatrix}  0 & b_m^T \\ 0 & 0  \end{pmatrix}.
\end{equation*}
Note that the first $r$ elements form a basis of $\mathrm{im}\,\pi_0$, the next $s$ elements form a basis of $\mathrm{im}\,\pi_0'$, and the rest of the elements form a basis of $\ker\pi_0'$. So, the two definitions of $V_0$ are equivalent. The same considerations apply to $V_\infty$.

Recall that \moblue{the} spaces $V_0$ and $V_\infty$ satisfy Condition \textbf{AA}, so we may apply the results of Section \ref{sec3}. 
In the block partition with respect to blocks of sizes {1,} $k-2$, $p$ and $n-p-k+1$ the upper-right corner of linear space $V_0(1)$ respectively $V_0'(1)$ is made of vectors of the form {$\begin{pmatrix}  {b'}^T & 0 & {b'''}^T  \end{pmatrix}$} respectively {$\begin{pmatrix}  0 &  0 &  {b'''}^T  \end{pmatrix}$}. Also, \moblue{the} {lower-left corner of} \moblue{the} linear space $V_0(-1)$ respectively $V_0'(-1)$ is made of all vectors of the form $\begin{pmatrix} c' \\ c'' \\ 0 \end{pmatrix}$ respectively $\begin{pmatrix}  0 \\  c'' \\ 0 \end{pmatrix}$. So, \moblue{the} linear space $V_0$ consists of all matrices of the form $\begin{pmatrix}
   \alpha & b'^{\mathrm{T}} & 0 & b'''^{\mathrm{T}} \\
   c' & D & 0 & E \\
   c'' & B & \lambda I+A' & C \\
   0 & 0 & 0 & \lambda I+F'
 \end{pmatrix}$, where $A'$ and $F'$ are strictly upper triangular. The case of the space $V_\infty$ goes in the same way. However, the block division there may be based on a different index denoted by $q$ instead of $p$. We now want {to} show that these indices are equal and that consequently $V_0=V_\infty$.}

\begin{prop}\label{prop:V=V_0}
  $V_\infty=V_0=V$.
\end{prop}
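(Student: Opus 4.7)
The plan is to establish $V_0=V_\infty$ as subspaces of $M_n(\mathbb{C})$ first, and then deduce $V=V_0$ via a rigidity argument for the $\mathbb{C}^*$-action. Since the explicit descriptions of $V_0$ and $V_\infty$ obtained in the preceding development are identical once the two block-partition indices $p$ and $q$ coincide, the equality $V_0=V_\infty$ is equivalent to $p=q$.

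For the inequality $q\le p$, I would identify $V_0(n)$ with the space of pure-NE matrices in $V$ and $V_\infty(n)$ with the NE-projection of $V$, giving the inclusion $V_0(n)\subseteq V_\infty(n)$ between subspaces of NE-row-vectors; the explicit dimensions $n-p-1$ and $n-q-1$ and the supports (zeros in columns $k,\ldots,k+p-1$ and $k,\ldots,k+q-1$, respectively) then force $q\le p$. The dual inclusion $V_\infty(-n)\subseteq V_0(-n)$ gives the same inequality. For the reverse direction $p\le q$, I would argue by contradiction: assuming $q<p$, there exists $A\in V$ whose NE-projection is nonzero in some column $c$ with $k+q\le c\le k+p-1$. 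Since $V$ contains every pure-NE matrix in $V_0(n)$ and every pure-SW matrix in $V_\infty(-n)$, one can subtract such elements from $A$ to arrange that both the NE-part and the SW-part of the resulting matrix are supported only on the ``extra'' range $\{k+q,\ldots,k+p-1\}$. Adding a scalar multiple of the regular matrix $L$ of the form \eqref{eq:regular_example} and expanding the characteristic polynomial in the style of Lemmas \ref{le:char}, \ref{le:twozeros}, and \ref{le:22} should produce a matrix in $V$ with more than $k$ distinct eigenvalues for generic scalar parameters, contradicting the at-most-$k$-eigenvalues hypothesis. This would establish $p=q$ and hence $V_0=V_\infty$.

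Once $V_0=V_\infty$ is in hand, I would invoke the $\mathbb{C}^*$-equivariant morphism $\Phi:\mathbb{P}^1\to F_m(X)$, $t\mapsto \phi(t)V\phi(t)^{-1}$, which takes $0$ and $\infty$ to $V_0$ and $V_\infty$ respectively. Its image is the closure of the $\mathbb{C}^*$-orbit of $V$ in the projective variety $F_m(X)$, and such an orbit closure is either a single fixed point or a curve with two distinct $\mathbb{C}^*$-fixed points attached at $0$ and $\infty$. Since here $V_0=V_\infty$, the closure must be a single point; therefore $V$ is $\phi$-invariant, and the limits $V_0$ and $V_\infty$ both equal $V$. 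The main obstacle in this plan is the reverse inequality $p\le q$: unlike $q\le p$, it does not follow from any formal dimension comparison and genuinely requires the eigenvalue hypothesis, via a delicate characteristic-polynomial computation carried out in the spirit of Lemma \ref{le:22}.
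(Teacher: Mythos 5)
Your outline captures the correct global architecture — reduce $V_\infty=V_0=V$ to $p=q$, establish $q\le p$ by a dimension comparison via the NE-projections, and then finish — and the two bookend pieces are sound. In particular, your $\mathbb{C}^*$-rigidity argument for the final step ($V_0=V_\infty\Rightarrow V=V_0$) is a clean, valid alternative to what the paper does: the paper instead shows by direct inspection that a carefully normalized basis $B_1,\ldots,B_m$ of $V$ lies entirely in $V_0$. Your version implicitly rests on the fact that under the Pl\"ucker embedding the limits $V_0$, $V_\infty$ are represented by the lowest- and highest-weight components of the multivector $\bigwedge_i B_i$, and two nonzero vectors in distinct weight spaces cannot be projectively proportional; this is standard but you should say it, since ``an orbit closure... has two distinct $\mathbb{C}^*$-fixed points'' is exactly the assertion to be justified.

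The genuine gap is the middle step $p\le q$, and it is not merely that you leave it as a to-do. First, your diagnosis that ``it does not follow from any formal dimension comparison'' is wrong for $k\ge4$: the paper computes $\dim(V_0(0)\cap V_\infty(0))={n-1\choose2}+{k-1\choose2}+2-(p-q)(k-2)$ directly from the two block descriptions, and then squeezes the number $v$ of block-diagonal members of a basis between $s-(p-q)$ and $s-(p-q)(k-2)$, which forces $p=q$ whenever $k-2\ge2$. So for $k\ge4$ a pure dimension count does work, and it is far shorter than a characteristic-polynomial argument. Second, for $k=3$ your sketch is missing the essential ingredient: you posit the existence of a single $A\in V$ whose NE-row and SW-column are \emph{both} nonzero on the range $\{k+q,\ldots,k+p-1\}$ after subtracting pure-NE and pure-SW elements, but there is no reason such an $A$ exists — the NE-row could be nonzero there while the SW-column of every such $A$ vanishes there. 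The paper circumvents this by using the basis normalization to produce explicit elements $B_{r+v+1},\ldots,B_{r+s}$ with prescribed $(4,2)$-block $e_{i-r-v}$ and unknown $(2,4)$-block $x_{i-r-v}$, proves $x_1,\ldots,x_{p-q}$ are linearly independent, and then kills $x_{p-q}$ via a discriminant-of-a-quartic computation (a $7\times7$ resultant). That construction is exactly what bridges the gap, and ``expanding in the style of Lemmas \ref{le:char}--\ref{le:22}'' does not substitute for it, because the matrices involved here sit in the $(p-q)$ block rather than the last block, so the determinant structure and the resulting resultant conditions are different from those lemmas.
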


\begin{proof}

Recall the definitions of \moblue{the} projections $\pi_0,\pi_0',\pi_\infty,$ and $\pi_\infty'$. Then $\mathrm{im}\,\pi_0=V_0(-1), \mathrm{im}\,\pi_0'=V_0(0), \ker\pi_0'=V_0(1), \mathrm{im}\,\pi_\infty=V_\infty(1), \mathrm{im}\,\pi_\infty'=V_\infty(0), $ and $\ker\pi_\infty'=V_\infty(-1).$
Note that $\ker\pi_0'\subseteq V$ and that $\pi_\infty$ is injective on $\ker\pi_0'$. Consequently, $\dim\ker\pi_0'\le \dim\mathrm{im}\,\pi_\infty$, or equivalently $\dim V_0(1)\le \dim V_\infty(1)$.
It was shown before the proposition that
{$$\dim V_0(1)=n-p-1\quad \mathrm{and}\quad \dim V_{\infty}(1)=n-q-1,$$
so $p\ge q$. We want to show that the equality holds.

Write elements of $V$, $V_0$ and $V_\infty$ with respect to block partition of respective sizes $1,k-2,q,p-q$ and $n-k-p+1$ (where some of these numbers may be zero). Then, sets $V_0$ and $V_\infty$ consist of matrices of the form
 \[
    \begin{pmatrix}
   \alpha & b'^{\mathrm{T}} & 0 & 0 & b'''^{\mathrm{T}} \\
   c' & D & 0 & 0 & E \\
   c_1'' & B_1 & \lambda I+A_1' & A_2' & C_1 \\
   c_2'' & B_2 & 0 & \lambda I+A_3' & C_2 \\
   0 & 0 & 0 & 0 & \lambda I+F'
 \end{pmatrix}\ \mbox{respectively}\ \begin{pmatrix}
   \alpha & b'^{\mathrm{T}} & 0 & b_1'''^{\mathrm{T}} & b_2'''^{\mathrm{T}} \\
   c' & D & 0 & E_1 & E_2 \\
   c'' & B & \lambda I+A' & C_1 & C_2 \\
   0 & 0 & 0 & \lambda I+F_1' & F_2' \\
   0 & 0 & 0 & 0 & \lambda I+F_3'
 \end{pmatrix},
 \]
 where $A',A_1',A_3',F',F_1',$ and $F_3'$ are strictly upper triangular. So, members of $V_0(0)\cap V_\infty(0)$ are of the form
 \begin{equation}\label{eq:presek}
    \begin{pmatrix}
   \alpha & 0 & 0 & 0 & 0 \\
   0 & D & 0 & 0 & E \\
   0 & B & \lambda I+A_1' & A_2' & C \\
   0 & 0 & 0 & \lambda I+F_1' & F_2' \\
   0 & 0 & 0 & 0 & \lambda I+F_3'
 \end{pmatrix},
 \end{equation}
 where $A_1',F_1',$ and $F_3'$ are strictly upper triangular.
 An easy computation reveals that
\begin{equation}\label{eq:presek}
\dim(V_0(0)\cap V_\infty(0))=\displaystyle {n-1\choose 2} + {k-1\choose 2}+2 -(p-q)(k-2).
\end{equation}}

Let $\varphi:\ker\pi_0\rightarrow\mathbb{C}^{p-q}$ be the projection defined by $\begin{pmatrix}
                                                                                   \alpha & b^T \\
                                                                                   0 & D
                                                                                 \end{pmatrix}\mapsto b_1'''$, where $b^T=\begin{pmatrix}
                                                                                        b'^T & b''^T & b_1'''^T & b_2'''^T
                                                                                      \end{pmatrix}$;
the blocks of the first matrix are of sizes $1$ and $n-1$ and the blocks of $b^T$ are of the sizes $k-2,q,p-q,$ and $n-p-k+1$. It is obvious that $\ker\pi_0'= V_0(1)\subseteq\ker\varphi$. So, $\varphi$ induces a linear map $\overline{\varphi}: V_0(0)=\mathrm{im}\,\pi_0'\cong \ker\pi_0/ \ker\pi_0'\rightarrow\mathbb{C}^{p-q}$. Let $A=\begin{pmatrix}
                                                 \alpha & 0 \\
                                                 0 & D
                                               \end{pmatrix}\in V_0(0)=\mathrm{im}\,\pi_0'$ be 
arbitrary. Then there exists $b$ such that $A'=\begin{pmatrix}
                                                 \alpha & b^T \\
                                                 0 & D
                                               \end{pmatrix}\in \ker\pi_0\subseteq V$. 
Write $b^T=\begin{pmatrix}
                                                                                        b'^T & b''^T & b_1'''^T & b_2'''^T
                                                                                      \end{pmatrix}$.
Then $b_1'''=\varphi(A')=\overline{\varphi}(A)$. Recall that $\pi_\infty$ is a projection from $V$ to the upper-right corner. The structure of $V_\infty(1)=\mathrm{im}\,\pi_\infty$ implies that all (1,3)\moblue{-}blocks of matrices from $V$ are zero. In particular, $b''=0$. 
If we subtract $A'$ from the matrix
\begin{equation}\label{pazzis}
  A+\begin{pmatrix}
      0 & 0 & 0 & \overline{\varphi}(A)^T & 0 \\
      0 & 0 & 0 & 0 & 0 \\
      0 & 0 & 0 & 0 & 0 \\
      0 & 0 & 0 & 0 & 0 \\
      0 & 0 & 0 & 0 & 0
    \end{pmatrix},
\end{equation}
the difference lies in $V_0(1)=\ker\pi_0'\subseteq V$. It follows that for each $A\in V_0(0)$ the matrix \eqref{pazzis} lies in $V$. In particular, $\ker\overline{\varphi}\subseteq V$. If $A\in V_0(0)\cap V$ then $A\in\ker\pi_{\infty}$, so $\pi_{\infty}'(A)\in \mathrm{im}\, \pi_{\infty}'=V_{\infty}(0)$. Since $\pi_{\infty}'$ is identity on $V_0(0)\cap V$, it follows that $V_0(0)\cap V\subseteq V_0(0)\cap V_{\infty}(0)$. Consequently, $\ker \overline{\varphi} \subseteq V_0(0)\cap V_{\infty}(0)$ and therefore
$$\dim (V_0(0)\cap V_{\infty}(0))\ge \dim \ker \overline{\varphi}\ge \dim V_0(0)-(p-q)=\displaystyle {n-1\choose 2} + {k-1\choose 2}+2-(p-q).$$
Combining this inequality with \eqref{eq:presek} we get $p-q\ge(k-2)(p-q)$.

{If $k\ge 4$, it now immediately follows that}
{$p=q$ and hence $V_0=V_{\infty}$}. 
In particular, $\mathrm{im}\, \pi_{\infty}=\ker \pi_0'$ and $\mathrm{im}\, \pi_0=\ker \pi_{\infty}'$, which implies that $V$ contains all upper-right and lower-left corners of its elements with respect to block partition $(1,n-1)$. Therefore $V=V_0=V_{\infty}$.


 It remains to { get a contradiction in} the case $k=3$ { when $p>q$}. In this case all the above dimension inequalities become equalities. This means that $\overline{\varphi}$ is surjective and its kernel is $V_0(0)\cap V_{\infty}(0)$, so the induced map $\overline{\overline{\varphi}}:\mathbb{C}^{p-q}\cong V_0(0)/(V_0(0)\cap V_{\infty}(0))\to \mathbb{C}^{p-q}$ is \moblue{an} isomorphism. It follows that for each $y\in \mathbb{C}^{p-q}$ the matrix
$$\begin{pmatrix}
0 & 0 & 0 & \overline{\overline{\varphi}}(y)^T & 0\\
0 & 0 & 0 & 0 & 0\\
0 & 0 & 0 & 0 & 0\\
0 & y & 0 & 0 & 0\\
0 & 0 & 0 & 0 & 0
\end{pmatrix}$$
belongs to $V$.

To obtain a contradiction we now 
 adjust the ideas of Lemmas \ref{le:dimen} and \ref{le:22}. 
Assume first that $p-q\ge 2$. Fix distinct nonzero numbers $\lambda_1$ and $\lambda_2$. In the above observation take $y=e_{p-q}$ and write
$\overline{\overline{\varphi}}(e_{p-q})^T=x^T=\begin{pmatrix}
x_1 & x_2 & \cdots & x_{p-q}
\end{pmatrix}$. Note that $V_0(-1)\cap V_{\infty}(-1)\subseteq \ker \pi_{\infty}\subseteq V$ and $V_0(0)\cap V_{\infty}(0)=\ker \overline{\varphi}\subseteq V$. It follows that for an arbitrary $i\in \{1,\ldots ,p-q-1\}$ 
 the matrix
$$\begin{pmatrix}
\lambda_1 & 0 & 0 & x^T & 0 \\
\mu & \lambda_2 & 0 & 0 & 0 \\
0 & 0 & 0 & 0 & 0 \\
0 & e_{p-q} & 0 & E_{i,p-q} & 0 \\
0 & 0 & 0 & 0 & 0
\end{pmatrix}$$ belongs to $V$ for each $\mu \in \mathbb{C}$. Hence it has at most three distinct eigenvalues. Its characteristic polynomial is equal to
\begin{align*}
  \Delta{(t)}&=
(-t)^{n+q-p-2}\begin{vmatrix}
                    \lambda_1-t & 0 & x^{T} \\
                    \mu & \lambda_2-t & 0 \\
                    0 & e_{p-q} & {E_{i,p-q}}-tI
                  \end{vmatrix}\\
&=(-t)^{n-2}(\lambda_1-t)(\lambda_2-t)-{\mu}(-t)^{n+q-p-2}\begin{vmatrix}
                                                                       0 & x^T \\
                                                                       e_{p-q} & {E_{i,p-q}}-tI
                                                                     \end{vmatrix}\\
&={(-t)^{n-4}\Big(t^2(\lambda_1-t)(\lambda_2-t)-\mu(x_i+tx_{p-q})\Big)}.
\end{align*}
 By the assumption on $V$ the quartic polynomial in the parentheses has a multiple zero for each $\mu \in \mathbb{C}$, so for each $\mu \in \mathbb{C}$ its discriminant
$$\begin{vmatrix}
1 & -(\lambda_1+\lambda_2) & \lambda_1\lambda_2 & -\mu x_{p-q} & -\mu x_i & 0 & 0 \\
0 & 1 & -(\lambda_1+\lambda_2) & \lambda_1\lambda_2 & -\mu x_{p-q} & -\mu x_i & 0 \\
0 & 0 & 1 & -(\lambda_1+\lambda_2) & \lambda_1\lambda_2 & -\mu x_{p-q} & -\mu x_i \\
4 & -3(\lambda_1+\lambda_2) & 2\lambda_1\lambda_2 & -\mu x_{p-q} & 0 & 0 & 0 \\
0 & 4 & -3(\lambda_1+\lambda_2) & 2\lambda_1\lambda_2 & -\mu x_{p-q} & 0 & 0 \\
0 & 0 & 4 & -3(\lambda_1+\lambda_2) & 2\lambda_1\lambda_2 & -\mu x_{p-q} & 0 \\
0 & 0 & 0 & 4 & -3(\lambda_1+\lambda_2) & 2\lambda_1\lambda_2 & -\mu x_{p-q} \\
\end{vmatrix}$$
is zero. The above discriminant is a polynomial of degree 4 in $\mu$ with constant term zero.
 The coefficient on $\mu$ is equal to
$${\small \begin{vmatrix}
1 & -(\lambda_1+\lambda_2) & \lambda_1\lambda_2 & 0 & 0 & 0 & 0 \\
0 & 1 & -(\lambda_1+\lambda_2) & \lambda_1\lambda_2 & 0 & 0 & 0 \\
0 & 0 & 1 & -(\lambda_1+\lambda_2) & \lambda_1\lambda_2 & 0 & -x_i \\
4 & -3(\lambda_1+\lambda_2) & 2\lambda_1\lambda_2 & 0 & 0 & 0 & 0 \\
0 & 4 & -3(\lambda_1+\lambda_2) & 2\lambda_1\lambda_2 & 0 & 0 & 0 \\
0 & 0 & 4 & -3(\lambda_1+\lambda_2) & 2\lambda_1\lambda_2 & 0 & 0 \\
0 & 0 & 0 & 4 & -3(\lambda_1+\lambda_2) & 2\lambda_1\lambda_2 & - x_{p-q} \\
\end{vmatrix}}=4\lambda_1^3\lambda_2^3(\lambda_1-\lambda_2)^2x_i$$
and the coefficient on $\mu^4$ is equal to
$${\small\begin{vmatrix}
1 & -(\lambda_1+\lambda_2) & \lambda_1\lambda_2 & -x_{p-q} & -x_i & 0 & 0 \\
0 & 1 & -(\lambda_1+\lambda_2) & 0 & -x_{p-q} & -x_i & 0 \\
0 & 0 & 1 & 0 & 0 & -x_{p-q} & -x_i \\
4 & -3(\lambda_1+\lambda_2) & 2\lambda_1\lambda_2 & -x_{p-q} & 0 & 0 & 0 \\
0 & 4 & -3(\lambda_1+\lambda_2) & 0 & -x_{p-q} & 0 & 0 \\
0 & 0 & 4 & 0 & 0 & -x_{p-q} & 0 \\
0 & 0 & 0 & 0 & 0 & 0 & -x_{p-q} \\
\end{vmatrix}}=-27(x_{p-q})^4.$$
As both coefficients have to be zero, we get $x_{p-q}=x_i=0$. Since $i$ was an arbitrary element from $\{1,\ldots, p-q-1\}$, we get $x=0$, a contradiction with 
the fact that $x=\overline{\overline{\varphi}}(e_{p-q})$ and $\overline{\overline{\varphi}}$ is an isomorphism.

On the other hand, if $p-q=1$, then we consider a similar matrix as above, with the only difference that the $(4,4)$-block is taken to be zero. The characteristic polynomial of this matrix equals
$$\Delta(t)=(-t)^{n-4}\Big(t^2(\lambda_1-t)(\lambda_2-t)-\mu x_{p-q}t\Big).$$
The same argument as above shows that $x_{p-q}=0$, i.e. $x=0$, which yields again a contradiction. This concludes the proof of the proposition.
\end{proof}

Finally we can finish the proof of Theorem \ref{thm:main}. By Proposition \ref{prop:V=V_0} the space $V$ is equal to $V_0$. Consequently, by the argument just before Proposition \ref{prop:V=V_0} there exists $p\in\{0,1,\ldots ,n-k+1\}$ such that with respect to block partition of respective sizes $k-1,p,n-k-p+1$ the space $V$ consists of all matrices of the form $\begin{pmatrix} D & 0 & E \\ B & A+\lambda I & C \\ 0 & 0 & F+\lambda I \end{pmatrix}$, where $A$ and $F$ are strictly upper triangular and all other nonzero blocks are arbitrary. \moblue{A} similarity that exchanges the first two block rows and columns now brings the space $V$ into the form \eqref{eq:required_form}.

\begin{center}
  \textsc{Acknowledgement}
\end{center}

The authors are indebted to an anonymous referee for numerous suggestions which helped us improve substantially the presentation and organization of this manuscript.

\end{document}